\numberwithin{equation}{section}
\newtheorem{theorem}{Theorem}[section]
\newtheorem{lemma}[theorem]{Lemma}
\newtheorem{corollary}[theorem]{Corollary}
\newtheorem{proposition}[theorem]{Proposition}
\theoremstyle{definition}
\newtheorem{definition}[theorem]{Definition}
\newtheorem{remark}[theorem]{Remark}
\DeclareMathOperator{\sign}{sign}
\newcommand{\dht}{\mathscr{H}}
\newcommand{\rtt}{\mathscr{R}}
\newcommand{\kht}{\mathscr{K}}
\newcommand{\iii}{\mathscr{I}}
\newcommand{\ttt}{\mathscr{T}}
\newcommand{\fourier}{\mathscr{F}}
\newcommand{\R}{\mathds{R}}
\newcommand{\Z}{\mathds{Z}}
\newcommand{\N}{\mathds{N}}
\newcommand{\skel}{\mathds{S}}
\newcommand{\fram}{\mathds{F}}
\newcommand{\ph}{\varphi}
\renewcommand{\le}{\leqslant}
\renewcommand{\ge}{\geqslant}
\NewDocumentCommand{\formula}{ssom}{%
 \IfBooleanTF{#1}{%
  \IfBooleanTF{#2}{%
   \IfValueTF{#3}%
    {\begin{align}\label{#3}\begin{gathered}#4\end{gathered}\end{align}}%
    {\begin{gather}#4\end{gather}}%
  }{%
   \IfValueTF{#3}%
    {\begin{align}\label{#3}\begin{aligned}#4\end{aligned}\end{align}}%
    {\begin{gather*}#4\end{gather*}}%
  }%
 }{%
  \IfValueTF{#3}%
   {\begin{align}\label{#3}#4\end{align}}%
   {\begin{align*}#4\end{align*}}%
 }%
}
\begin{document}

\title[The $\ell^p$ norm of the Riesz--Titchmarsh transform]{The $\ell^p$ norm of the Riesz--Titchmarsh transform for even integer $p$}

\author{Rodrigo Ba\~nuelos}
\address{Rodrigo Ba\~nuelos \\ Department of Mathematics \\ Purdue University \\ 150 N.\@ University Street, West Lafayette, IN 47907}
\email{banuelos@purdue.edu}

\author{Mateusz Kwa\'snicki}
\address{Mateusz Kwa\'snicki \\ Faculty of Pure and Applied Mathematics \\ Wroc\l aw University of Science and Technology \\ ul.\@ Wybrze\.ze Wyspia\'nskiego 27 \\ 50-370 Wroc\l aw, Poland}
\email{mateusz.kwasnicki@pwr.edu.pl}

\thanks{R.~Ba\~nuelos is supported in part by NSF Grant 1403417-DMS. M.~Kwa\'snicki is supported by the Polish National Science Centre (NCN) grant 2019/33/B/ST1/03098.}

\date{\today}

\keywords{Discrete Hilbert transform, Riesz--Titchmarsh operator, sharp inequalities}
\subjclass[2010]{Primary: 42A50, 42A05. Secondary: 39A12.}

\begin{abstract}
The long-standing conjecture that for $p \in (1, \infty)$ the $\ell^p(\Z)$ norm of the Riesz--Titchmarsh discrete Hilbert transform is the same as the $L^p(\R)$ norm of the classical Hilbert transform, is verified when $p = 2 n$ or $\frac{p}{p - 1} = 2 n$, for $n \in \N$. The proof, which is algebraic in nature, depends in a crucial way on the sharp estimate for the $\ell^p(\Z)$ norm of a different variant of this operator for the full range of $p$. The latter result was recently proved by the authors in~\cite{BanKwa}.
\end{abstract}

\maketitle

%
%

\section{Introduction and statement of main result}

There are several non-equivalent definitions of the \emph{discrete Hilbert transform} on $\Z$. In this paper we focus on the one introduced by E.C.~Titchmarsh in~\cite{Tit26} and often called the Riesz--Titchmarsh operator:
\formula[]{
 \rtt a_n & = \frac{1}{\pi} \sum_{m \in \Z} \frac{a_{n - m}}{m + \tfrac12} \, .
}
A conjecture that has remained open for almost a century states that for $p \in (1, \infty)$ the norm of $\rtt$ on $\ell^p(\Z)$ equals the norm of the (continuous) Hilbert transform
\formula[]{
 H f(x) & = \operatorname{p{.}v{.}} \frac{1}{\pi} \int_{-\infty}^{\infty} \frac{f(x - y)}{y} \, dy
}
on $L^p(\R)$. In~\cite{Tit26} Titchmarsh claimed to have proved this equality, but in the correction note~\cite{Tit27} he points out that his argument only gives the inequality
\formula[eq:titchmarsh]{
 \lVert H \rVert_p & \le \lVert \rtt \rVert_p .
}
The norm of $H$ was found by S.~Pichorides in~\cite{Pic} and by B.~Cole (see~\cite{Gam}) to be $\cot \frac{\pi}{2 p}$ when $p \ge 2$ and $\tan \frac{\pi}{2 p}$ when $p \le 2$. That is,
\formula[eq:pichorides]{
 \lVert H \rVert_p & = \cot \frac{\pi}{2p^*},
}
where $p^* = \max\{p, \tfrac{p}{p - 1}\}$ and $\tfrac{p}{p - 1}$ is the conjugate exponent of $p$. Combining~\eqref{eq:titchmarsh} and~\eqref{eq:pichorides}, the conjecture is to show that
\formula[]{
 \lVert \rtt \rVert_p & \le \cot \frac{\pi}{2 p^*} \, .
}
I.~Ts.~Gohberg, N.~Ya.~Krupnik and V.~I.~Matsaev proved the above inequality when $p$ is a power of $2$ or, by duality, when $p$ is the conjugate exponent of the power of $2$; see equation~(9) in~\cite{GohKru} and equation~(III.10.14) in~\cite{GohKre}. Our main result extends this to the case when $p$ is an even integer, or its conjugate exponent. More precisely, we prove the following result.

\begin{theorem}
\label{thm:main}
For $p = 2 n$ or $p = \frac{2 n}{2 n - 1}$, $n \in \N$, we have
\formula{
 \lVert (\rtt a_n) \rVert_p & \le \cot \frac\pi{2p^*} \, \lVert (a_n) \rVert_p,
}
for every sequence $(a_n)$ in $\ell^p$. The constant is best possible. In particular, the operator norm of $\rtt$ on $\ell^p$ is equal to the operator norm of $H$ on $L^p(\R)$ for such $p$.
\end{theorem}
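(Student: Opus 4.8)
The plan is to establish the upper bound only for $p = 2n$; the conjugate exponent $p = \frac{2n}{2n-1}$ then follows by duality, and sharpness follows from transference, so I would dispatch these two reductions first. The Fourier multiplier of $\rtt$ relative to $\hat a(\theta) = \sum_n a_n e^{-in\theta}$ is obtained by summing the Fourier series of the kernel $\tfrac{1}{\pi(m+\frac12)}$; a direct computation gives the symbol $m(\theta) = -i\,\sign(\theta)\,e^{i\theta/2}$ for $\theta \in (-\pi,\pi)$. Since $|m| \equiv 1$, the operator is unitary on $\ell^2$ (consistent with $\cot\frac{\pi}{4} = 1$), and the reflection $a_n \mapsto a_{-n}$, an isometry of every $\ell^p$, conjugates $\rtt$ into the multiplier with symbol $\overline{m(\theta)}$, that is, into $\rtt^*$. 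Hence $\|\rtt\|_{p'} = \|\rtt^*\|_p = \|\rtt\|_p$, so the bound for $p = 2n$ yields the bound for $\frac{2n}{2n-1}$. For the lower bound, note $m(\theta) \to -i\,\sign(\theta)$ as $\theta \to 0$, which is the symbol of $H$; a standard dilation/transference argument then gives $\|\rtt\|_p \ge \|H\|_p = \cot\frac{\pi}{2p^*}$ for every $p \in (1,\infty)$. It therefore remains to prove $\|\rtt a\|_{2n} \le \cot\frac{\pi}{4n}\,\|a\|_{2n}$.

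For the algebraic core I would compute $\|\rtt a\|_{2n}^{2n} = \sum_k |(\rtt a)_k|^{2n}$ by expanding $|(\rtt a)_k|^{2n} = (\rtt a)_k^{\,n}\,\overline{(\rtt a)_k}^{\,n}$ and applying Plancherel to the $2n$-fold product. This rewrites the norm as a $2n$-linear integral of $\prod_{j=1}^n m(\theta_j)\,\overline{m(\theta_{n+j})}\,\hat a(\theta_j)\,\overline{\hat a(\theta_{n+j})}$ over $(-\pi,\pi)^{2n}$, restricted to the frequency-conservation set $\sum_{j=1}^n \theta_j \equiv \sum_{j=1}^n \theta_{n+j} \pmod{2\pi}$. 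The decisive point is that on this set the half-integer phases collapse:
\[ \prod_{j=1}^n m(\theta_j)\,\overline{m(\theta_{n+j})} = \Big(\prod_{j=1}^n \sign\theta_j\,\sign\theta_{n+j}\Big)\, e^{\frac{i}{2}\left(\sum_{j=1}^n \theta_j - \sum_{j=1}^n \theta_{n+j}\right)} = \Big(\prod_{j=1}^n \sign\theta_j\,\sign\theta_{n+j}\Big)(-1)^w, \]
where $w = \frac{1}{2\pi}\big(\sum_{j=1}^n \theta_j - \sum_{j=1}^n \theta_{n+j}\big) \in \Z$. The troublesome factor $e^{i\theta/2}$ thus contributes only the sign $(-1)^w$. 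This is precisely the step that forces $p$ to be an even integer: only after raising to the even power $2n$ do the half-shifts combine into an integer multiple of $\pi$ and produce a real, manageable form.

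Bringing in \cite{BanKwa} is where the argument hinges. The same expansion applied to the variant $\dht$ of \cite{BanKwa}, whose symbol I denote $\mu$, produces the identical integral with $\prod \mu(\theta_j)\,\overline{\mu(\theta_{n+j})}$ in place of the product of $\rtt$-symbols, and \cite{BanKwa} supplies the sharp bound $\|\dht a\|_{2n} \le \cot\frac{\pi}{4n}\,\|a\|_{2n}$ (indeed, valid for all $p$). The task — and the step I expect to be the main obstacle — is to show that inserting the sign $(-1)^w$ does not increase the $2n$-linear form, i.e.\ that the $\rtt$-form is dominated by, or coincides with, the $\dht$-form. I would attempt to realize this by matching $\dht$ to $\rtt$ so that its even-power form reproduces the twisted form on the conservation set, trading the half-shift $e^{i\theta/2}$ for the sign $(-1)^w$ through an auxiliary change of the frequency variables; that identification would convert the desired estimate directly into the established sharp bound for $\dht$.

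Combining the two directions completes the argument: the displayed computation together with the $\dht$-bound yields $\|\rtt a\|_{2n} \le \cot\frac{\pi}{4n}\,\|a\|_{2n}$, duality extends it to $p = \frac{2n}{2n-1}$, and the transference lower bound shows the constant cannot be improved, so $\|\rtt\|_p = \cot\frac{\pi}{2p^*} = \|H\|_p$ for all such $p$. The genuinely delicate point is the transfer of the sharp inequality across the phase $e^{i\theta/2}$, and it is exactly the parity of $p$ that makes this transfer possible while leaving the general-$p$ conjecture open.
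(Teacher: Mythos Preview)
Your reductions are sound: duality handles the conjugate exponents, the lower bound is exactly the known transference inequality~\eqref{eq:rtt:lower}, and your phase computation is correct --- on the frequency-conservation set the half-integer shifts in the symbol of $\rtt$ collapse to $(-1)^w$, and this is indeed where evenness of $p$ enters.

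The gap is precisely at the step you yourself flag as ``the main obstacle''. You hope that the $2n$-linear form for $\rtt$ is dominated by, or can be identified via a change of frequency variables with, the corresponding form for the operator of~\cite{BanKwa}, but you supply no mechanism, and the naive reading is simply false. Taking $a = \delta_0$ one computes $\lVert \rtt \delta_0 \rVert_4^4 = \tfrac{1}{\pi^4}\sum_{n\in\Z}(n+\tfrac12)^{-4} = \tfrac13$ while $\lVert \dht_0 \delta_0 \rVert_4^4 = \tfrac{1}{\pi^4}\sum_{n\ne 0} n^{-4} = \tfrac{1}{45}$, so the pointwise inequality $\lVert \rtt a \rVert_{2n} \le \lVert \dht_0 a \rVert_{2n}$ fails already for $n=2$. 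More structurally, the symbol of $\dht_0$ has modulus strictly less than $1$ away from the origin, whereas that of $\rtt$ has modulus identically $1$; no measure-preserving change of variables on the torus can turn one multilinear form into the other. Whatever ``matching'' you have in mind must therefore be genuinely more elaborate than a substitution, and the proposal does not say what it is.

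The paper's route is quite different and never attempts a direct comparison of multilinear forms. It passes to the equivalent operator $\kht$ (Lemma~\ref{lem:norms}) and establishes the algebraic product rule
\[
 \kht a_n \cdot \kht b_n = \kht[a_n \cdot \dht b_n] + \kht[\dht a_n \cdot b_n] + \iii[a_n b_n]
\]
(Lemma~\ref{lem:square}), which is iterated to expand $(\kht a_n)^k$ as a structured sum indexed by combinatorial ``skeletons'' (Proposition~\ref{prop:power}). Estimating each term with H\"older's inequality and the known norms $\lVert \iii \rVert_q = 1$, $\lVert \dht \rVert_q = \cot\tfrac{\pi}{2q^*}$ produces a polynomial inequality in $\lVert \kht \rVert_p$ whose only other unknown is $\lVert \kht \rVert_{p/k}$ (Lemma~\ref{lem:rec}); a cotangent-addition identity then forces $\lVert \kht \rVert_p \le \cot\tfrac{\pi}{2p}$ once $\lVert \kht \rVert_{p/k} \le \cot\tfrac{k\pi}{2p}$ (Lemma~\ref{lem:cot}). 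Taking $k = p/2$ reduces everything to $\ell^2$. Note in particular that this argument consumes the sharp bound from~\cite{BanKwa} at \emph{all} the intermediate exponents $p, p/2, \ldots, p/(k-1)$, not just at $p = 2n$; see Remark~\ref{rem:k3}. Your single-exponent comparison, even if it could be made to work, would be using strictly less input.
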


Although our proof is algebraic, it relies on the sharp $\ell^p$ bound for the original discrete Hilbert transform as introduced by Hilbert in~1909, defined by
\formula[]{
 \dht_0 a_n & = \frac{1}{\pi} \sum_{m \in \Z \setminus \{0\}} \frac{a_{n - m}}{m} \, .
}
This bound was proved recently by the authors in~\cite{BanKwa} for every $p \in (1, \infty)$ using analytical and probabilistic techniques. For the reader's convenience, we state this result explicitly.

\begin{theorem}[\cite{BanKwa}, Theorem~1.1]
\label{thm:bk}
If $p \in (1, \infty)$, the $\ell^p$ norm of the operator $\dht_0$ is given by
\formula[eq:bk]{
 \lVert \dht_0 \rVert_p & = \cot \frac\pi{2p^*} .
}
In particular, the operator norm of $\dht_0$ on $\ell^p$ is equal to the operator norm of $H$ on $L^p(\R)$.
\end{theorem}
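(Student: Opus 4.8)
The plan is to establish the two inequalities $\|\dht_0\|_p \ge \cot\frac{\pi}{2p^*}$ and $\|\dht_0\|_p \le \cot\frac{\pi}{2p^*}$ separately; only the second carries real content. For the lower bound I would use a Riemann-sum transference from the continuous Hilbert transform: fix $f \in C_c^\infty(\R)$, put $a_n^{(\lambda)} = \lambda^{1/p} f(\lambda n)$, and let $\lambda \to 0^+$. Since $\tfrac{1}{\pi m}$ is the sampled kernel of $H$, the sums defining $\|a^{(\lambda)}\|_{\ell^p}$ and $\|\dht_0 a^{(\lambda)}\|_{\ell^p}$ converge to $\|f\|_{L^p(\R)}$ and $\|Hf\|_{L^p(\R)}$ respectively, so the asserted norm bound together with the value of $\|H\|_p$ quoted above gives $\|\dht_0\|_p \ge \|H\|_p = \cot\frac{\pi}{2p^*}$.

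For the upper bound I would first record the Fourier symbol. With $\hat a(\theta) = \sum_n a_n e^{-in\theta}$ the operator $\dht_0$ becomes multiplication by $\mu(\theta) = \frac1\pi \sum_{m \ne 0} \frac{e^{-im\theta}}{m} = -i\bigl(1 - \tfrac{\theta}{\pi}\bigr)$ for $\theta \in (0, 2\pi)$, a purely imaginary sawtooth with $|\mu(\theta)| \le 1$. Two features drive everything: the kernel $\tfrac1{\pi m}$ is odd, so $\mu$ is purely imaginary and $\dht_0$ behaves like a conjugate-function operator rather than a generic singular integral; and $\|\mu\|_\infty \le 1$ matches the symbol $-i\,\mathrm{sgn}(\xi)$ of $H$. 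I would then reduce the claim to the bilinear estimate $|\langle \dht_0 a, b\rangle| \le \cot\frac{\pi}{2p^*}\,\|a\|_{\ell^p}\|b\|_{\ell^{p'}}$ for finitely supported $a, b$, where $p' = p/(p-1)$.

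The core is to represent this bilinear form through martingales to which a sharp inequality applies, in the spirit of Gundy--Varopoulos and Ba\~nuelos--Wang. The idea is to extend the data harmonically (Poisson extension to the unit disk, or a discrete-space harmonic extension over $\Z$) and to run a space--time Markov process so that the extensions of $\hat a$ and of its $\mu$-conjugate furnish martingales $X$ and $Y$ whose boundary behaviour reproduces $(a_n)$ and $(\dht_0 a_n)$ in the relevant $\ell^p$ norms. Because $\mu$ is purely imaginary with modulus at most $1$, the transform carrying $X$ to $Y$ is by an antisymmetric matrix of norm $\le 1$, which makes $Y$ \emph{orthogonal} to and differentially subordinate to $X$. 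The sharp $L^p$ inequality for orthogonal, differentially subordinate martingales---whose best constant is precisely $\cot\frac{\pi}{2p^*}$, the probabilistic counterpart of Pichorides's theorem---then yields the bilinear bound after projecting back to the boundary. It is the orthogonality, forced by the oddness of the kernel, that produces the sharp cotangent rather than Burkholder's larger $p^*-1$.

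The step I expect to be the main obstacle is making this representation honest for $\ell^p(\Z)$. Although $\dht_0$ is a Fourier multiplier, it is a \emph{convolution} operator on $\ell^p(\Z)$, not a multiplier on $L^p(\mathbb{T})$, so both sides of the target inequality must be $\ell^p(\Z)$ norms; a naive transference would either replace these by $L^p(\mathbb{T})$ norms, changing the problem, or sample the singular kernel $\tfrac1{\pi x}$ and lose sharpness. Avoiding this loss requires an exact identity---such as a Poisson-summation representation of the sawtooth $\mu$ that turns the line kernel into the periodic one, together with an averaging over translations---arranged so that the orthogonal-martingale bound applies fibrewise and the average collapses to the single constant $\cot\frac{\pi}{2p^*}$. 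Securing this precise discretization, so that no constant is lost in passing from $H$ on $L^p(\R)$ to $\dht_0$ on $\ell^p(\Z)$, is exactly the point that kept the conjecture open, and is where the analytic and probabilistic machinery of \cite{BanKwa} must be brought to bear.
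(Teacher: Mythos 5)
First, a point of reference: the present paper does not prove this statement at all. Theorem~\ref{thm:bk} is imported verbatim from \cite{BanKwa} (Theorem~1.1 there) and used as a black box, after Lemma~\ref{lem:norms} converts it into the equality $\lVert \dht \rVert_p = \cot\frac{\pi}{2p^*}$ for the even-indexed variant. So the only meaningful comparison is with the proof in \cite{BanKwa}, and at the level of strategy your outline does track that proof: the lower bound by sampling ($a_n^{(\lambda)} = \lambda^{1/p} f(\lambda n)$, with Riemann-sum convergence of the discrete convolution to the principal-value integral, the oddness of the kernel taming the diagonal), the symbol computation (your $\mu(\theta) = -i(1 - \theta/\pi)$ on $(0, 2\pi)$ is correct and consistent with Lemma~\ref{lem:fourier}), and the plan to realize the operator as a projection of orthogonal, differentially subordinate martingale transforms in the Gundy--Varopoulos manner, with orthogonality --- forced by the purely imaginary symbol --- responsible for the cotangent constant rather than Burkholder's $p^* - 1$.

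As a proof, however, the proposal has a genuine gap, and you have located it yourself: all of the content of the theorem sits inside the step you describe only by its desired properties --- ``an exact identity \dots\ arranged so that the orthogonal-martingale bound applies fibrewise and the average collapses to the single constant.'' That is a specification, not a construction. In \cite{BanKwa} this step is an explicit representation of the periodized sawtooth multiplier as the projection of an orthogonal martingale transform of a background-radiation-type process, built from a concretely chosen pair of harmonic functions; nothing in your sketch produces such a pair or explains why the sawtooth symbol (as opposed to the pure $-i \sign$ symbol of the conjugate function) admits a representation with no loss in the constant. Two further corrections. First, your stated ``main obstacle'' is partly misplaced: passing between the $\ell^p(\Z)$ convolution operator and the $L^p(\R)$ Fourier multiplier with the same $2\pi$-periodic symbol is a classical transference that costs no constant, so the discretization is the easy half; the hard half is the martingale representation of the symbol itself. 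Second, the ``sharp $L^p$ inequality for orthogonal, differentially subordinate martingales'' with constant $\cot\frac{\pi}{2p^*}$ is not available in the generality you invoke: the Ba\~nuelos--Wang bound $\cot\frac{\pi}{2p}$ holds for $p \ge 2$, while for $1 < p < 2$ the sharp constant in that martingale setting is strictly larger than $\tan\frac{\pi}{2p}$ (differential subordination is not self-dual, and already for conformal martingales the best constant in this range exceeds the Pichorides constant). As written, your argument therefore fails for $1 < p < 2$; one must first reduce to $p \ge 2$ by exploiting the antisymmetry of the kernel, which gives $\lVert \dht_0 \rVert_p = \lVert \dht_0 \rVert_{p/(p-1)}$ by duality --- exactly the reduction made in \cite{BanKwa}.
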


We stress that in the proof of Theorem~\ref{thm:main} we need the assertion of Theorem~\ref{thm:bk} for all $p \in (1, \infty)$ and not just for even integers $p$. Formula~\eqref{eq:bk} when $p$ is a power of~2 has been known for several years, see~\cite{Lae}. We refer to Remarks~\ref{rem:k2} and~\ref{rem:k3} below for further discussion.

We note, more generally, that one can consider the one-parameter family of operators defined by
\formula[]{
 T_t a_n & =
 \begin{cases}
  \displaystyle \frac{\sin(\pi t)}{\pi} \sum_{m \in \Z} \frac{a_{n - m}}{m + t} & \text{if $t \in \R \setminus \Z$,} \\
  (-1)^t a_{n + t} & \text{if $t \in \Z$.}
 \end{cases}
}
It is shown in Theorem~1.1 in~\cite{CarSam} that for $t \ge 0$ these operators form a strongly continuous group of isometries on $\ell^2$ with generator $\pi \dht_0$. Note that when $t = \tfrac12$, $T_t$ corresponds to the Riesz--Titchmarsh operator. In Theorem~5.6 in~\cite{Lae}, the $\ell^p$ norm of these operators is studied for $t \in (0, 1)$ and $p \in (1, \infty)$, and it is proved that
\formula[eq:laeng]{
 \lVert T_t \rVert_p & \ge \lVert \cos(t \pi) I + \sin(t \pi) H \rVert_p ,
}
where $I$ is the identity operator. For $t = \tfrac{1}{2}$ this coincides with Titchmarsh's inequality~\eqref{eq:titchmarsh}:
\formula[]{
 \lVert \rtt \rVert_p & = \lVert T_{1/2} \rVert_p \ge \lVert H \rVert_p = \cot \frac{\pi}{2 p^*} \, .
}
Conjecture~5.7 in~\cite{Lae} asserts that equality holds in~\eqref{eq:laeng}. For all $t \in (0, 1)$, the right-hand side of~\eqref{eq:laeng} is evaluated explicitly (as a maximum of a certain function of one real variable) in Corollary~4.4 in~\cite{HolKalVer}.

For further history and references related to discrete Hilbert transforms and other classical discrete operators in harmonic analysis including Riesz transforms, we refer the reader to~\cite{ArcDomPet,BanKwa,BanKimKwa,CiaGilRonTorVar,DomOsePet,DomPet,GohKru,Gra16,Pie}. Some further historical remarks can be found in Section~\ref{sec:hist} below. For an early account of Riesz's proof on the boundedness of the Hilbert transform on $L^p(\R)$ and of $\dht_0$ on $\ell^p(\Z)$ before publication in \cite{Rie}, we refer the reader to M.L.~Cartwright's article~\cite{Car} \emph{Manuscripts of Hardy, Littlewood, Marcel Riesz and Titchmarsh}, Sections 6--8.

\smallskip

The remaining part of this article consists of six sections. First, in Section~\ref{sec:pre} we recall known properties of various variants of discrete Hilbert transforms. Section~\ref{sec:hist} briefly discusses other papers that involve related arguments. The idea of the proof of Theorem~\ref{thm:main} is given in Section~\ref{sec:idea}. The final three sections contain the actual proof: in Section~\ref{sec:skeletal} we introduce \emph{skeletal decomposition} of integer powers of $\rtt a_n$, in Section~\ref{sec:norms} we use it to find an inequality involving the norms of $\rtt a_n$ on different $\ell^p$ spaces, and we complete the proof of the main result in Section~\ref{sec:cotangents}.

%
%

\section{Preliminaries}
\label{sec:pre}

In this section we introduce basic notation and we gather some known results. For the convenience of the reader, we also provide short proofs where available.

We always assume that $p \in (1, \infty)$. By $\ell^p$ we denote the class of doubly infinite sequences $(a_n) = (a_n : n \in \Z)$ such that
\formula{
 \lVert (a_n) \rVert_p^p & = \sum_{n \in \Z} \lvert a_n \rvert^p
}
is finite. For a continuous linear operator $\ttt$ on $\ell^p$ we denote by $\lVert \ttt \rVert_p$ the operator norm of $\ttt$. We write a somewhat informal symbol $\ttt a_n$ for the $n$-th entry of the sequence obtained by applying $\ttt$ to the sequence $(a_n)$, and $\lVert a_n \rVert_p$ for the norm of the sequence $(a_n)$. We also commonly use the short-hand notation $\ttt[\ph(n)]$ for the $n$-th entry of the application of the operator $\ttt$ to the sequence obtained by evaluating $\ph(n)$ for $n \in \Z$. To improve readability, we often denote multiplication of numbers $a$ and $b$ by $a \cdot b$ rather than $a b$.

We work with a variant of the Riesz--Titchmarsh operator, studied, among others, by S.~Kak~\cite{Kak}, and thus sometimes called the \emph{Kak--Hilbert transform}:
\formula[]{
 \kht a_n & = \frac{2}{\pi} \sum_{m \in 2 \Z + 1} \frac{a_{n - m}}{m} \, ,
}
and a similarly modified operator $\dht_0$, which we denote simply by $\dht$:
\formula[]{
 \dht a_n & = \frac{2}{\pi} \sum_{m \in 2 \Z \setminus \{0\}} \frac{a_{n - m}}{m} \, .
}
We also commonly use the operator
\formula[]{
 \iii a_n & = \frac{4}{\pi^2} \sum_{m \in 2 \Z + 1} \frac{a_{n - m}}{m^2} \, .
}
Note that all these operators are convolution operators with kernel in $\ell^p$ for every $p \in (1, \infty)$. In particular, by Hölder's inequality, $\kht a_n$, $\dht a_n$ and $\iii a_n$ are well-defined when $(a_n)$ is in $\ell^p$ for some $p \in (1, \infty)$.

The following (rather well-known) result allows us to write Theorem~\ref{thm:main} in terms of $\kht$ rather than $\rtt$, and Theorem~\ref{thm:bk} in terms of $\dht$ rather than $\dht_0$. The first equality, $\lVert \kht \rVert_p = \lVert \rtt \rVert_p$, is proved in Theorem~3.1 in~\cite{CarSam}.

\begin{lemma}
\label{lem:norms}
We have the following equalities of operator norms:
\formula{
 \lVert \kht \rVert_p & = \lVert \rtt \rVert_p , \qquad \lVert \dht \rVert_p = \lVert \dht_0 \rVert_p , \qquad \lVert \iii \rVert_p = 1 .
}
\end{lemma}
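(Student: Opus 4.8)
The plan is to exploit the \emph{parity decomposition} of $\ell^p(\Z)$. Writing each sequence $(a_n)$ as the pair $(e, o)$ with $e_j = a_{2j}$ and $o_j = a_{2j + 1}$, the map $(a_n) \mapsto (e, o)$ is an isometry of $\ell^p(\Z)$ onto $\ell^p(\Z) \oplus \ell^p(\Z)$ with the norm $\lVert (e, o) \rVert^p = \lVert e \rVert_p^p + \lVert o \rVert_p^p$. Since the kernel of $\kht$ is supported on odd integers and subtracting an odd number reverses parity, $\kht$ becomes \emph{off-diagonal} in this decomposition; since the kernel of $\dht$ is supported on even integers, $\dht$ becomes \emph{diagonal}. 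I would compute both block operators explicitly and then invoke a simple norm identity for block operators on $\ell^p \oplus \ell^p$.

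First I would reindex. For $\dht$, substituting $m = 2 k$ gives $\dht a_n = \frac{1}{\pi} \sum_{k \neq 0} \frac{a_{n - 2 k}}{k}$; evaluating at $n = 2 j$ and $n = 2 j + 1$ shows that each diagonal block is exactly $\dht_0$, so in block form $\dht \cong \mathrm{diag}(\dht_0, \dht_0)$. For $\kht$, evaluating at odd output indices yields the even-to-odd block equal to $\rtt$ itself, while at even output indices the odd-to-even block is convolution with the kernel $\frac{1}{\pi(q - 1/2)}$. Denoting by $R$ the (isometric) reflection $R a_n = a_{-n}$, this last operator equals $-R \rtt R$, and hence has the same $\ell^p$ norm as $\rtt$. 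Thus $\kht \cong \left(\begin{smallmatrix} 0 & -R \rtt R \\ \rtt & 0 \end{smallmatrix}\right)$.

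Next I would record the elementary fact that a $2 \times 2$ block operator on $\ell^p \oplus \ell^p$ with a single nonzero entry in each row and column has operator norm equal to the maximum of the norms of its blocks: the upper bound follows from the $p$-additivity of the norm, and the lower bound from testing the operator on inputs supported in a single summand. Applied to the two block forms above, this gives at once $\lVert \dht \rVert_p = \lVert \dht_0 \rVert_p$ and $\lVert \kht \rVert_p = \max\{\lVert \rtt \rVert_p, \lVert R \rtt R \rVert_p\} = \lVert \rtt \rVert_p$, the latter reproving the identity of Carneiro--Sampaio.

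Finally, for $\iii$ I would use that it is convolution with the \emph{nonnegative} kernel $k(m) = \frac{4}{\pi^2 m^2} \mathds{1}_{m \text{ odd}} \in \ell^1(\Z)$. For such kernels the $\ell^p$ norm equals $\lVert k \rVert_1$: Young's inequality gives ``$\le$'', and testing on indicators of long intervals gives ``$\ge$''. It then remains to evaluate $\frac{4}{\pi^2} \sum_{m \text{ odd}} m^{-2} = \frac{4}{\pi^2} \cdot \frac{\pi^2}{4} = 1$. The only place demanding care is the bookkeeping in the reindexing for $\kht$, and in particular the verification that the odd-to-even block is a \emph{reflected} copy of $\rtt$ rather than $\rtt$ itself; everything else is routine.
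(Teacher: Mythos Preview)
Your argument is correct and follows essentially the same parity-decomposition route as the paper's own proof; the only cosmetic difference is that the paper indexes the odd subsequence as $c_n = a_{2n-1}$ rather than $o_j = a_{2j+1}$, which makes \emph{both} off-diagonal blocks of $\kht$ literally equal to $\rtt$ and spares you the reflection bookkeeping. (One small correction: the reference for $\lVert \kht \rVert_p = \lVert \rtt \rVert_p$ is De~Carli--Samad, not Carneiro--Sampaio.)
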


\begin{proof}
Let $(a_n)$ be a sequence in $\ell^p$, and write $b_n = a_{2 n}$ and $c_n = a_{2 n - 1}$. Then
\formula{
 \rtt b_n & = \frac{1}{\pi} \sum_{m \in \Z} \frac{b_{n - m}}{m + \tfrac12} = \frac{2}{\pi} \sum_{m \in \Z} \frac{a_{2 n - 2 m}}{2 m + 1} = \frac{2}{\pi} \sum_{k \in 2 \Z + 1} \frac{a_{2 n + 1 - k}}{k} = \kht a_{2 n + 1} , \\
 \rtt c_n & = \frac{1}{\pi} \sum_{m \in \Z} \frac{c_{n - m}}{m + \tfrac12} = \frac{2}{\pi} \sum_{m \in \Z} \frac{a_{2 n - 2 m - 1}}{2 m + 1} = \frac{2}{\pi} \sum_{k \in 2 \Z + 1} \frac{a_{2 n - k}}{k} = \kht a_{2 n} .
}
It follows that
\formula{
 \lVert \kht a_n \rVert_p^p & = \lVert \kht a_{2 n + 1} \rVert_p^p + \lVert \kht a_{2 n} \rVert_p^p \\
 & = \lVert \rtt b_n \rVert_p^p + \lVert \rtt c_n \rVert_p^p \\
 & \le \lVert \rtt \rVert_p^p (\lVert b_n \rVert_p^p + \lVert c_n \rVert_p^p) \\
 & = \lVert \rtt \rVert_p^p (\lVert a_{2 n} \rVert_p^p + \lVert a_{2 n - 1} \rVert_p^p) = \lVert \rtt \rVert_p^p \lVert a_n \rVert_p^p ,
}
and hence $\lVert \kht \rVert_p \le \lVert \rtt \rVert_p$. On the other hand, if $a_{2 n - 1} = c_n = 0$ for every $n \in \Z$, then $\kht a_{2n} = 0$ for every $n \in \Z$, and we find that
\formula{
 \lVert \rtt b_n \rVert_p & = \lVert \kht a_{2 n + 1} \rVert_p = \lVert \kht a_n \rVert_p \le \lVert \kht \rVert_p \lVert a_n \rVert_p = \lVert \kht \rVert_p \lVert b_n \rVert_p ,
}
so that $\lVert \rtt \rVert_p \le \lVert \kht \rVert_p$. We have thus proved that $\lVert \rtt \rVert_p = \lVert \kht \rVert_p$. A very similar argument shows that $\lVert \dht_0 \rVert_p = \lVert \dht \rVert_p$.

The operator $\iii$ is the convolution with a probability kernel:
\formula{
 \sum_{m \in 2 \Z + 1} \frac{1}{m^2} & = 2 \sum_{k = 0}^\infty \frac{1}{(2 k + 1)^2} = \frac{2 \pi^2}{8} = \frac{\pi^2}{4} \, ,
}
and so $\lVert \iii \rVert_p \le 1$ follows from Jensen's inequality. Finally, if $a_n = 1$ when $\lvert n \rvert \le N$ and $a_n = 0$ otherwise, then a simple estimate shows that for every positive integer $k$ we have
\formula{
 \limsup_{N \to \infty} \frac{\lVert \iii a_n \rVert_p^p}{\lVert a_n \rVert_p^p} & \ge \limsup_{N \to \infty} \frac{1}{2 N + 1} \sum_{n = -N + k}^{N - k} (\iii a_n)^p \\
 & \ge \limsup_{N \to \infty} \frac{2 N + 1 - 2 k}{2 N + 1} \times \frac{4}{\pi^2} \sum_{m = -k}^k \frac{1}{m^2} = \frac{4}{\pi^2} \sum_{m = -k}^k \frac{1}{m^2} ,
}
and the right-hand side can be arbitrarily close to $1$. Thus, $\lVert \iii \rVert_p \ge 1$, and the proof is complete.
\end{proof}

Lemma~\ref{lem:norms}, Theorems~\ref{thm:main} and~\ref{thm:bk}, and formulas~\eqref{eq:titchmarsh} and~\eqref{eq:pichorides}, give the following result.

\begin{corollary}
\begin{enumerate}[label=(\roman*)]
\item For $p \in (1, \infty)$, we have
\formula{
 \cot \frac{\pi}{2p^*} & = \lVert \dht_0 \rVert_p = \lVert \dht \rVert_p \le \lVert \rtt \rVert_p = \lVert \kht \rVert_p .
}
\item For $p = 2 n$ or $p = \frac{2 n}{2 n - 1}$, $n \in \N$, we have
\formula{
 \cot \frac{\pi}{2p^*} & = \lVert \dht_0 \rVert_p = \lVert \dht \rVert_p = \lVert \rtt \rVert_p = \lVert \kht \rVert_p .
}
\end{enumerate}
\end{corollary}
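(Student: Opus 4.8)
The plan is simply to assemble the results already established, since both parts of the corollary follow by combining Lemma~\ref{lem:norms}, Theorem~\ref{thm:bk}, Theorem~\ref{thm:main}, and the lower bound~\eqref{eq:rtt:lower}; no genuinely new argument is needed.

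For part~(i), I would first invoke Theorem~\ref{thm:bk} to record that $\lVert \dht_0 \rVert_p = \cot \frac{\pi}{2 p^*}$ for every $p \in (1, \infty)$. The first two equalities of Lemma~\ref{lem:norms}, namely $\lVert \dht \rVert_p = \lVert \dht_0 \rVert_p$ and $\lVert \kht \rVert_p = \lVert \rtt \rVert_p$, transfer the norms between each operator and its modified variant. It then remains only to link the two resulting pairs by an inequality, and this is precisely the content of~\eqref{eq:rtt:lower}, which gives $\lVert \rtt \rVert_p \ge \cot \frac{\pi}{2 p^*}$. Substituting the transferred identities produces the chain $\cot \frac{\pi}{2 p^*} = \lVert \dht_0 \rVert_p = \lVert \dht \rVert_p \le \lVert \rtt \rVert_p = \lVert \kht \rVert_p$ asserted in~(i).

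For part~(ii), the only additional ingredient is the matching upper bound. When $p = 2 n$ or $p = \frac{2 n}{2 n - 1}$, Theorem~\ref{thm:main} supplies $\lVert \rtt \rVert_p \le \cot \frac{\pi}{2 p^*}$, which together with the reverse inequality from part~(i) pinches $\lVert \rtt \rVert_p$ to equal $\cot \frac{\pi}{2 p^*}$. All four operator norms then coincide with this common value, which is the claim of~(ii).

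Since every step is a direct citation of an already-proved statement, there is no real obstacle within the corollary itself: the entire difficulty is concentrated in Theorem~\ref{thm:main} and Theorem~\ref{thm:bk}, both of which are taken as given here. The only point requiring a moment of care is the bookkeeping of conjugate exponents, namely observing that $2n$ and $\frac{2n}{2n-1}$ are conjugate to one another and that $p^*$ selects the larger of $p$ and $\frac{p}{p-1}$, so that the single constant $\cot \frac{\pi}{2 p^*}$ governs both families simultaneously.
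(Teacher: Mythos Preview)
Your proposal is correct and follows exactly the same approach as the paper, which simply states that the corollary is a consequence of Lemma~\ref{lem:norms}, Theorems~\ref{thm:main} and~\ref{thm:bk}, and formula~\eqref{eq:rtt:lower}.
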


For a sequence $(a_n)$ in $\ell^2$, we denote by $\fourier a(t)$ the Fourier series with coefficients~$a_n$:
\formula{
 \fourier a(t) & = \sum_{n \in \Z} a_n e^{-i n t}
}
for $t \in \R$. As this is a $2 \pi$-periodic function, we only consider $t \in (-\pi, \pi)$.

The following three results are also rather well-known.

\begin{lemma}
\label{lem:fourier}
For every sequence $(a_n)$ in $\ell^2$ we have:
\formula{
 \fourier[\kht a](t) & = -i \sign t \, \fourier a(t) , \\
 \fourier[\dht a](t) & = (1 - \tfrac2\pi \lvert t \rvert) (-i \sign t) \fourier a(t), \\
 \fourier[\iii a](t) & = (1 - \tfrac2\pi \lvert t \rvert) \fourier a(t) .
}
\end{lemma}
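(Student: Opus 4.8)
The plan is to exploit the fact that $\kht$, $\dht$, and $\iii$ are all convolution operators, so that the Fourier series turns each of them into multiplication by the \emph{symbol} of the operator, i.e.\ the Fourier series of its kernel. Indeed, if $\ttt a_n = \sum_{m} k_m a_{n-m}$ for a kernel $(k_m)$, then
\[
 \fourier[\ttt a](t) = \sum_{n \in \Z} \Big(\sum_{m \in \Z} k_m a_{n-m}\Big) e^{-int} = \Big(\sum_{m \in \Z} k_m e^{-imt}\Big) \fourier a(t) ,
\]
so the entire content of the lemma is the identification of the three symbols $\sum_m k_m e^{-imt}$ with the claimed multipliers $-i \sign t$, $(1 - \tfrac2\pi|t|)(-i \sign t)$, and $1 - \tfrac2\pi|t|$ on $(-\pi, \pi)$.

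Rather than summing the (only conditionally convergent) kernel series directly, I would argue in the opposite direction and work in $L^2$. Each kernel lies in $\ell^2$, and each candidate multiplier $\mu(t)$ is bounded, hence in $L^2(-\pi,\pi)$; so it suffices to check that the Fourier coefficients $\tfrac1{2\pi}\int_{-\pi}^{\pi} \mu(t) e^{imt}\,dt$ of the candidate reproduce exactly the kernel $k_m$. For~\eqref{eq:khtf} this is the elementary computation
\[
 \frac{1}{2\pi} \int_{-\pi}^{\pi} (-i \sign t)\, e^{imt}\,dt = \frac{1 - (-1)^m}{\pi m} ,
\]
which vanishes for even $m$ and equals $\tfrac2{\pi m}$ for odd $m$, matching the kernel of $\kht$ in~\eqref{eq:kht}. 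The same kind of elementary integration (integration by parts, using that $1 - \tfrac2\pi|t|$ is even and piecewise linear) recovers the kernels of $\dht$ in~\eqref{eq:dht} and of $\iii$ in~\eqref{eq:iii}. Once the coefficients match, Parseval's identity gives the multiplier formula $\fourier[\ttt a] = \mu \, \fourier a$ in $L^2$, first for finitely supported $(a_n)$ and then, since each $\mu$ is bounded and the operators are bounded on $\ell^2$, for all $(a_n) \in \ell^2$.

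One could equally well sum the kernel series directly: \eqref{eq:khtf} is the classical square-wave series $\sum_{k \ge 0} \frac{\sin((2k+1)t)}{2k+1} = \tfrac\pi4 \sign t$; \eqref{eq:dhtf} follows from the sawtooth series $\sum_{j \ge 1} \frac{\sin(js)}{j} = \tfrac{\pi - s}{2}$ on $(0, 2\pi)$ evaluated at $s = 2t$, the kernel of $\dht$ being supported on the even integers $m = 2j$; and \eqref{eq:iiif} is the triangle-wave series $\sum_{k \ge 0} \frac{\cos((2k+1)t)}{(2k+1)^2} = \tfrac\pi4(\tfrac\pi2 - |t|)$. A convenient consistency check is that the three symbols satisfy $(1 - \tfrac2\pi|t|)(-i \sign t) = (1 - \tfrac2\pi|t|) \cdot (-i \sign t)$, i.e.\ the symbol of $\dht$ is the product of those of $\iii$ and $\kht$, which amounts to the operator identity $\dht = \iii \kht$. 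The only point requiring care, and hence the main obstacle, is convergence: the kernels of $\kht$ and $\dht$ are not absolutely summable and their symbol series converge only conditionally, with a jump at $t = 0$; this is precisely why I prefer to verify the Fourier coefficients of the bounded multipliers and invoke Parseval, which sidesteps any delicate pointwise summation while simultaneously re-confirming boundedness of $\kht$, $\dht$, and $\iii$ on $\ell^2$.
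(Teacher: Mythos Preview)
Your proposal is correct and follows essentially the same route as the paper: both recognize that $\kht$, $\dht$, $\iii$ are convolutions with $\ell^2$ kernels and hence Fourier multipliers, and both identify the symbols by computing the Fourier coefficients $\tfrac{1}{2\pi}\int_{-\pi}^{\pi}\mu(t)e^{imt}\,dt$ of the candidate multipliers and matching them to the kernel entries. Your additional remarks on the alternative direct summation via the classical square-, sawtooth-, and triangle-wave series, the consistency check $\dht=\iii\kht$, and the convergence caveat are all correct embellishments but not a different argument.
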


\begin{proof}
Observe that $\kht$, $\dht$ and $\iii$ are convolution operators, with kernels in $\ell^2$ (and even in $\ell^1$ for $\iii$). Thus, they are Fourier multipliers, and the corresponding symbols are simply Fourier series with coefficients given by convolution kernels of $\kht$, $\dht$ and $\iii$. Evaluation of these kernels reduces to well-known formulas.

For $\kht$, we have
\formula{
 \frac{1}{2 \pi} \int_{-\pi}^\pi (-i \sign t) e^{i m t} dt & =
 \begin{cases}
  \frac{2}{\pi} \, \frac{1}{m} & \text{if $m$ is odd,} \\
  0 & \text{if $m$ is even.}
 \end{cases}
}
For $\dht$,
\formula{
 \frac{1}{2 \pi} \int_{-\pi}^\pi (1 - \tfrac2\pi \lvert t \rvert) (-i \sign t) e^{i m t} dt & =
 \begin{cases}
  0 & \text{if $m$ is odd or $m = 0$,} \\
  \frac{2}{\pi} \, \frac{1}{m} & \text{if $m$ is even and $m \ne 0$.}
 \end{cases}
}
Finally, for $\iii$,
\formula{
 \frac{1}{2 \pi} \int_{-\pi}^\pi (1 - \tfrac2\pi \lvert t \rvert) e^{i m t} dt & =
 \begin{cases}
  \frac{4}{\pi^2} \, \frac{1}{m^2} & \text{if $m$ is odd,} \\
  0 & \text{if $m$ is even.}
 \end{cases}
 \qedhere
}
\end{proof}

\begin{lemma}
\label{lem:l2}
We have
\formula{
 \lVert \kht \rVert_2 & = \lVert \dht \rVert_2 = 1 .
}
\end{lemma}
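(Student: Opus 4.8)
The plan is to use that $\kht$ and $\dht$ are Fourier multiplier operators whose symbols were already computed in Lemma~\ref{lem:fourier}, and to reduce the computation of the $\ell^2$ norm to the supremum of the modulus of each symbol. First I would recall Parseval's identity for Fourier series: with the convention $\fourier a(t) = \sum_{n \in \Z} a_n e^{-i n t}$ one has
\[
 \lVert a_n \rVert_2^2 = \sum_{n \in \Z} \lvert a_n \rvert^2 = \frac{1}{2 \pi} \int_{-\pi}^{\pi} \lvert \fourier a(t) \rvert^2 \, dt ,
\]
so that $\fourier$ is, up to the factor $\tfrac{1}{2 \pi}$, an isometry of $\ell^2$ onto $L^2(-\pi, \pi)$.

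Under this isometry, by Lemma~\ref{lem:fourier} the operator $\kht$ is carried to multiplication by $m_{\kht}(t) = -i \sign t$ and $\dht$ to multiplication by $m_{\dht}(t) = (1 - \tfrac{2}{\pi} \lvert t \rvert)(-i \sign t)$. The $\ell^2$ operator norm of each therefore equals the $L^2$ operator norm of the associated multiplication operator, which in turn equals the essential supremum of the modulus of the symbol. It then remains to evaluate these suprema. For $\kht$ we have $\lvert m_{\kht}(t) \rvert = 1$ for all $t \ne 0$, and hence $\lVert \kht \rVert_2 = 1$ at once. For $\dht$ we have $\lvert m_{\dht}(t) \rvert = \bigl\lvert 1 - \tfrac{2}{\pi} \lvert t \rvert \bigr\rvert$, which is bounded above by $1$ on $(-\pi, \pi)$ and tends to $1$ as $t \to 0$; its essential supremum is thus $1$, giving $\lVert \dht \rVert_2 = 1$.

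I expect the only point deserving a word of care to be the lower bound for $\dht$: unlike $m_{\kht}$, the symbol $m_{\dht}$ has modulus strictly less than $1$ in the interior of $(-\pi, \pi)$ and reaches the value $1$ only in the limits $t \to 0$ (and $\lvert t \rvert \to \pi$), so the supremum is realized not by a single test sequence but by sequences whose Fourier transforms concentrate near $t = 0$. This is precisely the content of the standard identity that the norm of a multiplication operator on $L^2$ equals the essential supremum of its symbol; once that is invoked, both equalities follow directly from Parseval's identity and Lemma~\ref{lem:fourier}, with no further computation.
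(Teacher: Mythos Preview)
Your proposal is correct and follows exactly the approach of the paper, which records only that the lemma ``is an immediate consequence of Lemma~\ref{lem:fourier} and Parseval's identity.'' Your write-up simply unpacks that sentence, including the small remark that for $\dht$ the modulus of the symbol attains its supremum $1$ only as a limit, so that the lower bound comes from the standard identification of the $L^2$ norm of a multiplication operator with the essential supremum of its symbol.
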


\begin{proof}
This is an immediate consequence of Lemma~\ref{lem:fourier} and Parseval's identity.
\end{proof}

\begin{lemma}
\label{lem:dki}
For every sequence $(a_n)$ in $\ell^p$ we have
\formula{
 \dht a_n & = \iii \kht a_n = \kht \iii a_n .
}
\end{lemma}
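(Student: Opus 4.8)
The plan is to read off both identities from the multiplier formulas in Lemma~\ref{lem:fourier}. All three of $\kht$, $\dht$ and $\iii$ are convolution operators; denote their kernels by $\kappa$, $\mu$ and $\iota$, so that $\kappa_m = \tfrac{2}{\pi m}$ for odd $m$ and $0$ otherwise, $\mu_m = \tfrac{2}{\pi m}$ for nonzero even $m$ and $0$ otherwise, and $\iota_m = \tfrac{4}{\pi^2 m^2}$ for odd $m$ and $0$ otherwise. Since $\kappa$ and $\mu$ decay like $1/m$, they lie in $\ell^q$ for every $q \in (1, \infty)$; since $\iota$ decays like $1/m^2$, it lies in $\ell^1$. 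In particular all three kernels belong to $\ell^2$. At the level of kernels the claim is simply $\iota * \kappa = \mu$, and the whole statement will follow once this is established, because convolution is commutative.

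First I would prove the kernel identity $\iota * \kappa = \mu$ on $\ell^2$, where the Fourier calculus of Lemma~\ref{lem:fourier} applies directly. Since $\iota \in \ell^1$ and $\kappa \in \ell^2$, Young's inequality gives $\iota * \kappa \in \ell^2$, and the convolution theorem yields that its Fourier series is the product of the Fourier series of $\iota$ and of $\kappa$. By \eqref{eq:iiif} and \eqref{eq:khtf} this product equals
\[
 \bigl(1 - \tfrac{2}{\pi}\lvert t\rvert\bigr)\cdot(-i\sign t),
\]
which by \eqref{eq:dhtf} is exactly the Fourier series of $\mu$. As the Fourier transform is injective on $\ell^2$ by Parseval's identity, we conclude $\iota * \kappa = \mu$ as sequences. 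Commutativity of convolution then also gives $\kappa * \iota = \iota * \kappa = \mu$, which will produce the second equality in \eqref{eq:dki}. Note that the conceptual content is entirely here: the symbols of $\iii$ and $\kht$ multiply to the symbol of $\dht$, in either order.

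It remains to upgrade this kernel identity to the operator identity on $\ell^p$. For $a \in \ell^p$ one has $\iii\kht a = \iota * (\kappa * a)$ and $\kht\iii a = \kappa * (\iota * a)$, and I would rewrite both as $(\iota * \kappa) * a = \mu * a = \dht a$ via associativity of convolution. The one point requiring care — and the main, if modest, obstacle — is justifying this associativity, since $\kappa$ is not summable and one cannot appeal to the Banach-algebra structure of $\ell^1$. This is handled by absolute convergence: for fixed $n$, Hölder's inequality with $\kappa \in \ell^{p'}$ and $a \in \ell^p$ shows that $\sum_j \kappa_{n - j} a_j$ converges absolutely and is bounded by $\lVert \kappa \rVert_{p'} \lVert a \rVert_p$ uniformly in $n$; since $\iota \in \ell^1$, the iterated sum $\sum_i \iota_i \sum_j \kappa_{n - i - j} a_j$ then converges absolutely, and Fubini's theorem for sums permits the interchange that yields $(\iota * \kappa) * a$. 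The same argument with the roles of $\iota$ and $\kappa$ exchanged (now using $\iota * a \in \ell^p$ by Young's inequality and $\kappa \in \ell^{p'}$) handles $\kht\iii a$, completing the proof.
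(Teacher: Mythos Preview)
Your proof is correct and coincides with the paper's second proof (the paper offers two): both read the identity off from the multiplier formulas in Lemma~\ref{lem:fourier}, observing that the symbols of $\iii$ and $\kht$ multiply to that of $\dht$. The only difference is in the passage from $\ell^2$ to $\ell^p$: the paper says simply ``extension to $\ell^p$ follows by continuity'' (relying on boundedness of the three operators on $\ell^p$ and density of $\ell^2\cap\ell^p$), whereas you argue directly via associativity of convolution, justified by absolute convergence from $\iota\in\ell^1$ and $\kappa\in\ell^{p'}$. Both routes are fine; yours is more self-contained, while the paper's is shorter. The paper's first proof, by contrast, is a direct series computation: one expands $\iii\kht a_n$ as a double sum, substitutes $j=m+k$, and evaluates $\sum_{m\in 2\Z+1} m^{-2}(j-m)^{-1}$ in closed form using the partial-fraction identity and the tangent series, obtaining $\tfrac{\pi^2}{4j}$ for even~$j$.
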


We offer two independent proofs of this simple result.

\begin{proof}[Proof \#1:]
We have
\formula{
 \iii \kht a_n = \kht \iii a_n & = \frac{8}{\pi^3} \sum_{m \in 2 \Z + 1} \sum_{k \in 2 \Z + 1} \frac{a_{n - m - k}}{m^2 k} \\
 & = \frac{8}{\pi^3} \sum_{m \in 2 \Z + 1} \sum_{j \in 2 \Z} \frac{a_{n - j}}{m^2 (j - m)} \\
 & = \frac{8}{\pi^3} \sum_{j \in 2 \Z} \biggl( \sum_{m \in 2 \Z + 1} \frac{1}{m^2 (j - m)} \biggr) a_{n - j}.
}
Note that all sums are absolutely convergent. The inner sum can be evaluated explicitly: we have
\formula{
 \sum_{m \in 2 \Z + 1} \frac{1}{m^2 (j - m)} & = \frac{1}{j} \sum_{m \in 2 \Z + 1} \frac{1}{m^2} + \frac{1}{j^2} \sum_{m \in 2 \Z + 1} \biggl(\frac{1}{m} + \frac{1}{j - m} \biggr) \\
 & = \frac{1}{j} \, \frac{\pi^2}{4} + \frac{1}{j^2} \sum_{k = 0}^\infty \biggl(\frac{1}{j + 2 k + 1} + \frac{1}{j - 2 k - 1}\biggr) \\
 & = \frac{\pi^2}{4} \, \frac{1}{j} + \frac{1}{j^2} \, \frac{\pi}{2} \tan \frac{j \pi}{2} \, .
}
Since for $j \in 2 \Z$ we have $\tan \frac{j \pi}{2} = 0$, it follows that
\formula{
 \iii \kht a_n = \kht \iii a_n & = \frac{8}{\pi^3} \sum_{j \in 2 \Z} \frac{\pi^2}{4} \, \frac{1}{j} \, a_{n - j} = \dht a_n ,
}
as desired.
\end{proof}

\begin{proof}[Proof \#2:]
By Lemma~\ref{lem:fourier}, if $(a_n)$ is in $\ell^2$, then
\formula{
 \fourier[\iii \kht a](t) & = (1 - \tfrac2\pi \lvert t \rvert) \fourier[\kht a](t) = -i (1 - \tfrac2\pi \lvert t \rvert) \sign t \, \fourier a(t) = \fourier[\dht a](t)
}
and, similarly,
\formula{
 \fourier[\kht \iii a](t) & = -i \sign t \, \fourier[\iii a](t) = -i (1 - \tfrac2\pi \lvert t \rvert) \sign t \, \fourier a(t) = \fourier[\dht a](t) ,
}
as desired. Extension to $\ell^p$ follows by continuity.
\end{proof}

The next lemma provides a key \emph{product rule}. It is similar to an analogous result for $\dht a_n \cdot \dht b_n$, which was used to estimate the operator norm of $\dht$ on $\ell^p$ when $p$ is a power of~$2$ in~\cite{Lae}. The corresponding result for the continuous Hilbert transform is the well-known formula
\formula[eq:square:con]{
 H f \cdot H g & = H(f \cdot H g) + H(H f \cdot g) + f \cdot g ;
}
see pp.~253--254 in~\cite{Gra14}.

\begin{lemma}
\label{lem:square}
For all sequences $(a_n)$ and $(b_n)$ in $\ell^p$ we have
\formula[eq:square]{
 \kht a_n \cdot \kht b_n & = \kht[a_n \cdot \dht b_n] + \kht[\dht a_n \cdot b_n] + \iii[a_n b_n] .
}
\end{lemma}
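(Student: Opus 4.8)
The plan is to expand both sides as convolution sums against the explicit kernels of $\kht$, $\dht$ and $\iii$, reindex everything into one common shape, and then reduce the identity to an elementary algebraic relation between the kernels. Write $k(m) = \tfrac2\pi \tfrac1m$ for odd $m$ and $k(m) = 0$ otherwise, $d(m) = \tfrac2\pi \tfrac1m$ for even $m \neq 0$ and $d(m) = 0$ otherwise, and $\iota(m) = \tfrac4{\pi^2} \tfrac1{m^2}$ for odd $m$ and $\iota(m) = 0$ otherwise, so that $\kht a_n = \sum_m k(m) a_{n-m}$, $\dht a_n = \sum_m d(m) a_{n-m}$ and $\iii a_n = \sum_m \iota(m) a_{n-m}$. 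First I would record that, since $k, d \in \ell^{p/(p-1)}$ and $\iota \in \ell^1$, Hölder's inequality makes every double sum that appears below absolutely convergent for $(a_n), (b_n)$ in $\ell^p$; this licenses all applications of Fubini's theorem and every change of summation variable, so no separate density argument is needed.

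Second, I would bring all four terms to the common form $\sum_{p, q} C_{p, q} \, a_{n - p} b_{n - q}$, where $p, q$ range over odd integers. The left-hand side is at once $\sum_{p, q} k(p) k(q) \, a_{n - p} b_{n - q}$. In $\kht[a_n \cdot \dht b_n] = \sum_{p, r} k(p) d(r) \, a_{n - p} b_{n - p - r}$ I substitute $q = p + r$, and in the symmetric term $\kht[\dht a_n \cdot b_n] = \sum_{q, r} k(q) d(r) \, a_{n - q - r} b_{n - q}$ I substitute $p = q + r$; since $p$ and $q$ are odd, the shift $r$ is automatically even, which is exactly where $d$ is supported, and these become $\sum_{p, q} k(p) d(q - p) \, a_{n - p} b_{n - q}$ and $\sum_{p, q} k(q) d(p - q) \, a_{n - p} b_{n - q}$. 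Finally $\iii[a_n b_n] = \sum_p \iota(p) \, a_{n - p} b_{n - p}$ contributes only on the diagonal $p = q$.

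Third, matching the coefficient of $a_{n - p} b_{n - q}$ reduces \eqref{eq:square} to two elementary checks. On the diagonal $p = q$ the factors $d(q - p) = d(0) = 0$ drop out, and one needs $k(p)^2 = \iota(p)$ for odd $p$, which is immediate from $\bigl(\tfrac2\pi \tfrac1p\bigr)^2 = \tfrac4{\pi^2} \tfrac1{p^2}$. Off the diagonal one needs $k(p) k(q) = k(p) d(q - p) + k(q) d(p - q)$ for distinct odd $p, q$; after cancelling the common factor $\tfrac4{\pi^2}$ this is precisely the partial-fraction identity $\tfrac1{pq} = \tfrac1{q - p}\bigl(\tfrac1p - \tfrac1q\bigr)$, valid because $q - p \neq 0$. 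Collecting the two cases yields the claim.

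I expect the only real care to lie in the bookkeeping of the second step: one must check that the parities of $p$ and $q$ match the parity restrictions built into the kernels (odd for $k$ and $\iota$, even and nonzero for $d$), so that the diagonal $p = q$ is exactly where the $\dht$-terms vanish and the $\iii$-term takes over. Once the absolute convergence established in the first step justifies the reindexing, the remainder is a one-line computation. The identity can alternatively be verified on the Fourier side via Lemma~\ref{lem:fourier}, where it becomes the pointwise relation $(-i \sign u)(-i \sign v) = (-i \sign(u + v)) \bigl[(1 - \tfrac2\pi \lvert u \rvert)(-i \sign u) + (1 - \tfrac2\pi \lvert v \rvert)(-i \sign v)\bigr] + (1 - \tfrac2\pi \lvert u + v \rvert)$ between the symbols; but making this rigorous requires tracking the $2\pi$-periodization of the symbols in the periodic convolution, which is more delicate than the direct kernel computation above.
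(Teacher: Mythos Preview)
Your argument is correct and is essentially the paper's Proof~\#1: both expand everything into double sums over the kernels, reindex to a common bilinear form in $(a_{n-\cdot},b_{n-\cdot})$, and reduce to the partial-fraction identity $\tfrac{1}{m(j-m)}+\tfrac{1}{j(m-j)}=\tfrac{1}{mj}$, with the $\iii$ term supplying the diagonal; your closing remark about the Fourier-side verification is exactly the paper's Proof~\#2. One purely cosmetic point: you use $p,q$ as summation indices, which collides with the ambient $\ell^p$ exponent --- the paper uses $m,j$ instead.
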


Again, we offer two proofs of this result.

\begin{proof}[Proof \#1:]
We have
\formula{
 \kht[a_n \cdot \dht b_n] & = \frac{4}{\pi^2} \sum_{m \in 2 \Z + 1} \sum_{k \in 2 \Z \setminus \{0\}} \frac{a_{n - m} b_{n - m - k}}{m k} \\
 & = \frac{4}{\pi^2} \sum_{m \in 2 \Z + 1} \sum_{j \in (2 \Z + 1) \setminus \{m\}} \frac{a_{n - m} b_{n - j}}{m (j - m)} \, .
}
Similarly,
\formula{
 \kht[\dht a_n \cdot b_n] & = \frac{4}{\pi^2} \sum_{m \in 2 \Z + 1} \sum_{j \in (2 \Z + 1) \setminus \{m\}} \frac{a_{n - j} b_{n - m}}{m (j - m)} \, .
}
Changing the roles of $m$ and $j$ in the latter equality and combining both identities, we find that
\formula{
 & \kht[a_n \cdot \dht b_n] + \kht[\dht a_n \cdot b_n] \\
 & \qquad = \frac{4}{\pi^2} \sum_{m \in 2 \Z + 1} \sum_{j \in (2 \Z + 1) \setminus \{m\}} \biggl( \frac{1}{m (j - m)} + \frac{1}{j (m - j)} \biggr) a_{n - m} b_{n - j} \\
 & \qquad = \frac{4}{\pi^2} \sum_{m \in 2 \Z + 1} \sum_{j \in (2 \Z + 1) \setminus \{m\}} \frac{a_{n - m} b_{n - j}}{m j} \, .
}
For $j = m$ (which is excluded from the double sum), the expression under the sum is equal to $a_{n - m} b_{n - m} / m^2$, appearing in the expression for $\iii[a_n \cdot b_n]$. It follows that
\formula{
 & \kht[a_n \cdot \dht b_n] + \kht[\dht a_n \cdot b_n] + \iii[a_n \cdot b_n] = \frac{4}{\pi^2} \sum_{m \in 2 \Z + 1} \sum_{j \in 2 \Z + 1} \frac{a_{n - m} b_{n - j}}{m j} \\
 & \hspace*{8em} = \biggl(\frac{2}{\pi} \sum_{m \in 2 \Z + 1} \frac{a_{n - m}}{m} \biggr) \biggl( \frac{2}{\pi} \sum_{j \in 2 \Z + 1} \frac{b_{n - j}}{j} \biggr) = \kht a_n \cdot \kht b_n ,
}
as desired.
\end{proof}

\begin{proof}[Proof \#2:]
Suppose that $(a_n)$ and $(b_n)$ are in $\ell^2$. Recall that
\formula{
 a_n & = \frac{1}{2 \pi} \int_{-\pi}^\pi \fourier a(t) e^{i n t} dt , & b_n & = \frac{1}{2 \pi} \int_{-\pi}^\pi \fourier b(s) e^{i n s} ds .
}
Therefore,
\formula{
 a_n \cdot b_n & = \frac{1}{4 \pi^2} \int_{-\pi}^\pi \int_{-\pi}^\pi \fourier a(t) \fourier b(s) e^{i n t + i n s} dt ds \\
 & = \frac{1}{2 \pi} \int_{-\pi}^\pi \biggl( \frac{1}{2 \pi} \int_{-\pi}^\pi \fourier a(t) \fourier b(r - t) dt \biggr) e^{i n r} dr ,
}
and thus
\formula{
 \fourier[a \cdot b](r) & = \frac{1}{2 \pi} \int_{-\pi}^\pi \fourier a(t) \fourier b(r - t) dt .
}
Let $S(t)$ and $I(t)$ denote $2\pi$-periodic functions such that $S(t) = \sign t$ and $I(t) = 1 - \tfrac2\pi \lvert t \rvert$ for $t \in (-\pi, \pi)$. Applying the above formula to $(\kht a_n)$ and $(\kht b_n)$ rather than to $(a_n)$ and $(b_n)$, we find that
\formula{
 \fourier[\kht a \cdot \kht b](r) & = \frac{1}{2 \pi} \int_{-\pi}^\pi (-i S(t)) (-i S(r - t)) \fourier a(t) \fourier b(r - t) dt \\
 & = -\frac{1}{2 \pi} \int_{-\pi}^\pi S(t) S(r - t) \cdot \fourier a(t) \fourier b(r - t) dt .
}
Similarly,
\formula{
 \fourier[\kht [a \cdot \dht b]](r) & = -\frac{1}{2 \pi} \int_{-\pi}^\pi S(r) I(r - t) S(r - t) \cdot \fourier a(t) \fourier b(r - t) dt , \\
 \fourier[\kht [\dht a \cdot b]](r) & = -\frac{1}{2 \pi} \int_{-\pi}^\pi S(r) I(t) S(t) \cdot \fourier a(t) \fourier b(r - t) dt , \\
 \fourier[\iii [a \cdot b]](r) & = \frac{1}{2 \pi} \int_{-\pi}^\pi I(r) \cdot \fourier a(t) \fourier b(r - t) dt .
}
The desired result follows now from the elementary identity
\formula[eq:fourier:square]{
 S(t) S(s) & = S(t + s) I(s) S(s) + S(t + s) I(t) S(t) - I(s + t)
}
after substituting $s = r - t$. The proof of~\eqref{eq:fourier:square} is straightforward, but tedious. Both sides are $2\pi$-periodic with respect to both $t$ and $s$, so we can restrict our attention to $t, s \in (-\pi, \pi)$. Additionally, both changing $(t,s)$ to $(-t,-s)$ or to $(s,t)$ does not change neither side of~\eqref{eq:fourier:square}, so it is sufficient to consider the case when $t < s$ and $t + s > 0$. If $0 < t < s < \pi$, then
\formula{
 & S(t + s) I(s) S(s) + S(t + s) I(t) S(t) - I(s + t) \\
 & \qquad = 1 \cdot (1 - \tfrac2\pi s) \cdot 1 + 1 \cdot (1 - \tfrac2\pi t) \cdot 1 - (1 - \tfrac2\pi (t + s)) \\
 & \qquad \qquad = 1 - \tfrac2\pi s + 1 - \tfrac2\pi t - 1 + \tfrac2\pi (t + s) = 1 = S(t) S(s) .
}
Similarly, if $-\pi < t < 0 < s < \pi$ and $t + s > 0$, then
\formula{
 & S(t + s) I(s) S(s) + S(t + s) I(t) S(t) - I(s + t) \\
 & \qquad = 1 \cdot (1 - \tfrac2\pi s) \cdot 1 + 1 \cdot (1 + \tfrac2\pi t) \cdot (-1) - (1 - \tfrac2\pi (t + s)) \\
 & \qquad \qquad = 1 - \tfrac2\pi s - 1 - \tfrac2\pi t - 1 + \tfrac2\pi (t + s) = -1 = S(t) S(s) .
}
This exhausts all possibilities, and hence the proof is complete. Extension to general $(a_n)$ and $(b_n)$ in $\ell^p$ follows by continuity.
\end{proof}

%
%

\section{Historical remarks}
\label{sec:hist}

The key idea of our proof goes back to the original work of Titchmarsh~\cite{Tit26}. Formula~(2.32) in~\cite{Tit26} asserts that if $b_n = \rtt a_{-n}$, then
\formula{
 \rtt[b_{-n}^2] & = \dht_0[a_n^2] + 2 a_n \cdot \dht_0 a_n .
}
This identity is proved by a direct calculation, and it is essentially equivalent to our Lemma~\ref{lem:square}. It is used to prove that if $\rtt$ is bounded on $\ell^p$, then it is bounded on $\ell^{2 p}$, and in fact
\formula[eq:titchmarsh:square]{
 \lVert \rtt \rVert_{2 p} & \le \lVert \rtt \rVert_p + \sqrt{\tfrac2\pi (5 p + 3) \lVert \rtt \rVert_p + 2 \lVert \rtt \rVert_p^2} \, ;
}
see formula~(2.37) in~\cite{Tit26}. Since $\rtt$ is a unitary operator on $\ell^2$, by induction it follows that $\rtt$ is bounded on $\ell^p$ when $p$ is a power of $2$.

The estimate~\eqref{eq:titchmarsh:square} is not sharp, and thus it led to suboptimal bounds on the norm of $\rtt$. A very similar argument, but with sharp bounds, was applied in~\cite{GohKru} for the continuous Hilbert transform. In this work formula~\eqref{eq:square:con} was used to prove the optimal bound $\lVert H \rVert_p \le \cot \frac{\pi}{2p^*}$ when $p$ or the conjugate exponent of $p$ is a power of $2$. Let us briefly discuss the argument used in~\cite{GohKru}.

By~\eqref{eq:square:con}, we have
\formula{
 \lvert H f \rvert^2 & = \lvert f \rvert^2 + 2 H[f \cdot H f] .
}
Combining this with Hölder's inequality, we find that
\formula{
 \lVert H f \rVert_{2 p}^2 = \lVert (H f)^2 \rVert_p & \le \lVert f^2 \rVert_p + 2 \lVert H[f \cdot H f] \rVert_p \\
 & \le \lVert f \rVert_{2 p}^2 + 2 \lVert H \rVert_p \lVert f \cdot H f \rVert_p \\
 & \le \lVert f \rVert_{2 p}^2 + 2 \lVert H \rVert_p \lVert f \rVert_{2 p} \lVert H f \rVert_{2 p} \\
 & \le \lVert f \rVert_{2 p}^2 + 2 \lVert H \rVert_p \lVert H \rVert_{2 p} \lVert f \rVert_{2 p}^2 ,
}
or, equivalently,
\formula{
 \lVert H \rVert_{2 p} & \le \lVert H \rVert_p + \sqrt{1 + \lVert H \rVert_p^2} .
}
The claimed bound $\lVert H \rVert_p \le \cot \frac{\pi}{2 p^*}$ for $p = 2^n$ follows from $\lVert H \rVert_2 = 1$ by induction on $n$ and the trigonometric identity
\formula{
 \cot \tfrac\alpha2 & = \cot \alpha + \sqrt{1 + \cot^2 \alpha} .
}

As already mentioned, essentially the same argument was used in~\cite{Lae} for the sharp upper bound of the operator norm of $\dht$ on $\ell^p$ when $p$ is a power of $2$. The proof of the similar estimate for $\rtt$ in~\cite{GohKru,GohKre} follows quite a different path: sequences $a_n$ and $\rtt a_n$ are identified with eigenvalues of the real and imaginary parts of an appropriate Volterra operator, and Matsaev's theorem for the latter (Theorem~III.6.2 in~\cite{GohKre}) yields the desired inequality. Intestingly, however, the key estimate in the proof of Matsaev's theorem is obtained using the very same idea, attributed in~\cite{GohKre} to M.~Cotlar (see page~159 in~\cite{Cot}): an analogue of Lemma~\ref{lem:square} for Volterra operators (Theorem~III.3.1 in~\cite{GohKre}) is used to derive a bound for exponent $2 p$ from an estimate for exponent $p$.

In our case, in order to estimate the operator norm of $\kht$ on $\ell^{2 k}$, we develop the expression $(\kht a_n)^k$ repeatedly using Lemma~\ref{lem:square}. This idea can be traced back to the original Riesz's work~\cite{Rie}, where the development of $(f + i H f)^k$ was used to find the suboptimal bound
\formula{
 \lVert H \rVert_{2 k} & \le \frac{2 k}{\log 2}
}
for the continuous Hilbert transform for even integers $p = 2 k$, see Sections~3 and~7 in~\cite{Rie}. As a further historical note we mention that in response to a letter from Riesz dated December~23, 1923, describing his proof, Hardy writes back on January~5, 1924 and says: ``Most elegant \& beautiful, it is amazing that none of us would have seen it before (even for $p=4$!),'' see pp.~489~\&~502 in Cartwright's article~\cite{Car}.

%
%

\section{Idea of the proof}
\label{sec:idea}

As mentioned above, the key idea of our work is to develop the expression $(\kht a_n)^k$ repeatedly using the product rule given in Lemma~\ref{lem:square}. The difficult part is to do it in a right way, and know when to stop. Here is a brief description of our method.

In the first step, we simply take two factors $\kht a_n$ from $(\kht a_n)^k$, we apply the product rule~\eqref{eq:square} to it, and we expand the resulting expression. In each of the next steps, our expression is a finite sum of finite products, and we apply repeatedly the following procedure to each of the increasing number of summands.
\begin{itemize}
\item Whenever there is no factor of the form $\kht[\ldots]$ other than $\kht a_n$, or the entire summand is of the form $\kht[\ldots]$, we stop.
\item Otherwise there is exactly one factor of the form $\kht[\ldots]$ other than $\kht a_n$. We choose this factor and one of the remaining factors $\kht a_n$, we apply the product rule~\eqref{eq:square}, and expand the result.
\end{itemize}
The above procedure leads to a finite sum of terms which are either of the form $\kht[\ldots]$, or of the form $\iii[\ldots] (\kht a_n)^j$ with no other appearance of $\kht$.

What remains to be done is to use Hölder's inequality and known expressions for the operator norms of $\dht$ and $\iii$ to bound the $\ell^{p/k}$ norm of $(\kht a_n)^k$, and then simplify the result. We illustrate the above approach by explicit calculations for $k = 2, 3, 4$.

For $k = 2$ we simply have
\formula{
 (\kht a_n)^2 & = \kht a_n \cdot \kht a_n = 2 \kht[a_n \cdot \dht a_n] + \iii[a_n^2] .
}
Therefore, whenever $\lVert a_n \rVert_p \le 1$, we have
\formula{
 \lVert \kht a_n \rVert_p^2 = \lVert (\kht a_n)^2 \rVert_{p/2} & \le 2 \lVert \kht \rVert_{p/2} \lVert \dht \rVert_p + \lVert \iii \rVert_{p/2} .
}
We already know that $\lVert \iii \rVert_p = 1$ and $\lVert \dht \rVert_p = \cot \tfrac\pi{2p}$ for every $p \ge 2$. Thus, if $\lVert \kht \rVert_{p/2} = \cot \tfrac\pi p$ for some $p \ge 4$, then
\formula[eq:k2]{
 \lVert \kht \rVert_p^2 & \le 2 \cot \tfrac\pi p \cdot \cot \tfrac\pi{2p} + 1 = (\cot \tfrac\pi{2p})^2 .
}
In fact, the above argument easily implies that the operator norm of $\kht$ on $\ell^p$ is $\cot \tfrac\pi{2p}$ when $p$ is a power of $2$. This is essentially the same argument as the one used for the continuous Hilbert transform in~\cite{GohKru} and described in Section~\ref{sec:hist}, as well as the one applied in~\cite{Lae} for the discrete Hilbert transform $\dht_0$.

\begin{remark}
\label{rem:k2}
The above argument shows that a sharp estimate on $\ell^{p/2}$ implies sharp estimate on $\ell^p$, and it does not really require the result of~\cite{BanKwa}. Indeed: even without that result we know that $\lVert \dht \rVert_p \le \lVert \iii \rVert_p \lVert \kht \rVert_p = \lVert \kht \rVert_p$, and so instead of~\eqref{eq:k2} we obtain that
\formula{
 \lVert \kht \rVert_p^2 & \le 2 \cot \tfrac\pi p \cdot \lVert \kht \rVert_p + 1 .
}
This is sufficient to show that if $\lVert \kht \rVert_{p/2} \le \cot \tfrac\pi p$, then $\lVert \kht \rVert_p \le \cot \tfrac\pi{2p}$.
\end{remark}

The calculations get more involved for $k = 3$. In this case we have
\formula{
 (\kht a_n)^3 & = \kht a_n \cdot \bigl(\kht a_n \cdot \kht a_n) \\
 & \stackrel r= \kht a_n \cdot \bigl(2 \kht[a_n \cdot \dht a_n] + \iii[a_n^2]\bigr) \\
 & \stackrel e= 2 \kht a_n \cdot \kht[a_n \cdot \dht a_n] + \kht a_n \cdot \iii[a_n^2] \\
 & \stackrel r= 2 \kht[a_n \cdot \dht[a_n \cdot \dht a_n]] + 2 \kht[\dht a_n \cdot a_n \cdot \dht a_n] + 2 \iii[a_n \cdot a_n \cdot \dht a_n] \\
 & \qquad + \kht a_n \cdot \iii[a_n^2] ,
}
where $\stackrel r=$ marks an application of the product rule~\eqref{eq:square}, and $\stackrel e=$ denotes simple expansion. Therefore, whenever $\lVert a_n \rVert_p \le 1$, we have
\formula{
 \lVert \kht a_n \rVert_p^3 = \lVert (\kht a_n)^3 \rVert_{p/3} & \le 2 \lVert \kht \rVert_{p/3} \lVert \dht \rVert_{p/2} \lVert \dht \rVert_p + 2 \lVert \kht \rVert_{p/3} \lVert \dht \rVert_p \lVert \dht \rVert_p \\
 & \hspace*{8em} + 2 \lVert \iii \rVert_{p/3} \lVert \dht \rVert_p + \lVert \kht \rVert_p \lVert \iii \rVert_{p/2} .
}
If $\lVert \kht \rVert_{p/3} = \cot \tfrac{3\pi}{2p}$, then it follows that
\formula{
 \lVert \kht \rVert_p^3 & \le 2 \cot \tfrac{3\pi}{2p} \cdot \cot \tfrac\pi p \cdot \cot \tfrac\pi{2p} + 2 \cot \tfrac{3\pi}{2p} \cdot \cot \tfrac\pi{2p} \cdot \cot \tfrac\pi{2p} + 2 \cot \tfrac\pi{2p} + \lVert \kht \rVert_p .
}
It is an elementary, but non-obvious fact that this inequality becomes an equality if $\lVert \kht \rVert_p = \cot \tfrac\pi{2p}$, and it is relatively easy to see that the inequality in fact implies that $\lVert \kht \rVert_p \le \cot \tfrac\pi{2p}$.

\begin{remark}\label{rem:k3}
We stress that in the above proof that $\lVert \kht \rVert_{p/3} \le \cot \tfrac{3\pi}{2p}$ implies $\lVert \kht \rVert_p \le \cot \tfrac\pi{2p}$, we use the result of~\cite{BanKwa} in its full strength: we need an estimate for the operator norm of $\dht$ on $\ell^{p/2}$, and the argument would not work without this ingredient. In other words, the approach used for $k = 2$ in Remark~\ref{rem:k2} no longer applies for $k = 3$.
\end{remark}

The case $k = 4$ is, of course, not interesting, as it is sufficient to apply the result for $k = 2$ twice. Nevertheless, it is instructive to study $k = 4$ only to better understand the method. The initial steps are the same as for $k = 3$, and we re-use the above calculation:
\formula{
 (\kht a_n)^4 & = \kht a_n \cdot \bigl(2 \kht[a_n \cdot \dht[a_n \cdot \dht a_n]] + 2 \kht[\dht a_n \cdot a_n \cdot \dht a_n] \\
 & \hspace*{16em} + \iii[a_n^2 \cdot \dht a_n] + \kht a_n \cdot \iii[a_n^2]\bigr) \\
 & \stackrel e= 2 \kht a_n \cdot \kht[a_n \cdot \dht[a_n \cdot \dht a_n]] \\
 & \qquad + 2 \kht a_n \cdot \kht[\dht a_n \cdot a_n \cdot \dht a_n] \\
 & \qquad \qquad + \kht a_n \cdot \iii[a_n^2 \cdot \dht a_n] + (\kht a_n)^2 \cdot \iii[a_n^2] \displaybreak[0]\\
 & \stackrel r= 2 \kht[a_n \cdot \dht[a_n \cdot \dht[a_n \cdot \dht a_n]]] + 2 \kht[\dht a_n \cdot a_n \cdot \dht[a_n \cdot \dht a_n]] \\
 & \hspace*{16.5em} + 2 \iii[a_n \cdot a_n \cdot \dht[a_n \cdot \dht a_n]] \\
 & \qquad + 2 \kht[a_n \cdot \dht[\dht a_n \cdot a_n \cdot \dht a_n]] + 2 \kht[\dht[a_n] \cdot \dht a_n \cdot a_n \cdot \dht a_n] \\
 & \qquad \hspace*{15em} + 2 \iii[a_n \cdot \dht a_n \cdot a_n \cdot \dht a_n] \\
 & \qquad \qquad + \kht a_n \cdot \iii[a_n^2 \cdot \dht a_n] + (\kht a_n)^2 \cdot \iii[a_n^2] .
}
Again this leads to an inequality that involves only $\lVert \kht \rVert_p$ and $\lVert \kht \rVert_{p/4}$ and known constants, and solving this inequality shows that if $\lVert \kht \rVert_{p/4} \le \cot \tfrac{2\pi}p$, then $\lVert \kht \rVert_p \le \cot \tfrac\pi{2p}$. However, the calculations are even more involved than in the case $k = 3$, and so we stop here.

\begin{remark}
It is important not to develop further the expressions such as $(\kht a_n)^{k - 2} \cdot \iii[a_n^2]$ using the product rule~\eqref{eq:square}. For $k = 4$ that would lead to expressions involving the unknown operator norms of $\kht$ on $\ell^{p/2}$. This could be circumvented by re-using appropriately the result for $k = 2$. However, already when $k = 5$ further development of $(\kht a_n)^3 \cdot \iii[a_n^2]$ would lead to expressions involving the unknown operator norms of $\kht$ on $\ell^{p/3}$ or $\ell^{p/2}$, and the method would break.
\end{remark}

Clearly, the expressions become rapidly more complicated as $k$ grows, and we need a systematic way to handle them. This is done using \emph{skeletons} introduced in the next section.

%
%

\section{Skeletons, frames and buildings}
\label{sec:skeletal}

The following auxiliary definition allows us to conveniently enumerate the terms in the development of $(\kht a_n)^k$.

\begin{definition}
\label{def:skeleton}
For a positive integer $k$, we define the set $\skel_k$ of \emph{skeletons} of size $k$ inductively:
\formula{
 \skel_1 & = \bigl\{ \{1\} \bigr\} , \\
 \skel_{k + 1} & = \bigl\{ \{S, k + 1\} : S \in \skel_k \bigr\} \cup \bigl\{ S \cup \{\{k + 1\}\} : S \in \skel_k \bigr\} .
}
We define the set $\fram$ of \emph{frames} to be the minimal collection of numbers and sets with the following properties:
\begin{itemize}
\item every integer is a frame;
\item every finite set of frames is a frame.
\end{itemize}
More precisely, we let $\fram_0$ be the set of integers, that is, \emph{frames of depth $0$}, and for every positive integer $k$ we define the set $\fram_k$ of \emph{frames of depth at most $k$} to be the collection of all finite sets of frames of depth less than $k$. Finally, we define the \emph{size} $\lvert F \rvert$ of a frame $F$ inductively by
\formula{
 \lvert F \rvert & =
 \begin{cases}
  1 & \text{if $F$ is an integer,} \\[0.2em]
  \displaystyle \sum_{f \in F} \lvert f \rvert & \text{if $F$ is a set.}
 \end{cases}
}
\end{definition}

Clearly, there are $2^k$ skeletons of size $k$, and every skeleton is a frame. The size of a frame $F$ is just the number of integers (`bones') that appear in a textual (roster notation) or graphical (Venn diagram) representation of $F$, while the depth of $F$ is the maximal number of nested brackets in the textual representation of $F$. It is straightforward to see that every skeleton of size $k$ is a frame of size $k$ (with variable depth). By inspection, we find that the initial sets of skeletons are:
\formula{
 \skel_1 & = \bigl\{ \{1\} \bigr\} \\
 \skel_2 & = \bigl\{ \{\{1\}, 2\} , \{1, \{2\}\} \bigr\} \\
 \skel_3 & = \bigl\{ \{\{\{1\}, 2\}, 3\} , \{\{1, \{2\}\}, 3\}, \{\{1\}, 2, \{3\}\} , \{1, \{2\}, \{3\}\} \bigr\} \\
 \skel_4 & = \bigl\{ \{\{\{\{1\}, 2\}, 3\}, 4\} , \{\{\{1, \{2\}\}, 3\}, 4\}, \{\{\{1\}, 2, \{3\}\}, 4\} , \\
 & \qquad \{\{1, \{2\}, \{3\}\}, 4\} , \{\{\{1\}, 2\}, 3, \{4\}\} , \{\{1, \{2\}\}, 3, \{4\}\} , \\
 & \qquad \qquad \{\{1\}, 2, \{3\}, \{4\}\} , \{1, \{2\}, \{3\}, \{4\}\} \bigr\} .
}

\begin{definition}
\label{def:building}
For a sequence $(a_n)$ in $\ell^p$ and a frame $F$, we define the \emph{building} $(\dht^{\{F\}} a_n)$ with frame $\{F\}$ inductively by
\formula{
 \dht^{\{F\}} a_n & =
 \begin{cases}
  a_n & \text{if $F$ is an integer,} \\
  \displaystyle \dht \biggl[ \prod_{f \in F} \dht^{\{f\}} a_n \biggr] & \text{if $F$ is a set.}
 \end{cases}
}
If the frame $F$ is a set containing more than one element, we extend the above definition, so that the building $(\dht^F a_n)$ with frame $F$ is given by
\formula{
 \dht^F a_n & = \prod_{f \in F} \dht^{\{f\}} a_n .
}
\end{definition}

In other words, construction of the building $(\dht^F a_n)$ with frame $F$ corresponds to replacing every integer in the textual representation of the frame $F$ by $(a_n)$, every pair of corresponding curly brackets $\{\ldots\}$ --- except the outermost ones --- by $\dht[\ldots]$, and every comma by multiplication. For example,
\formula{
 \dht^{\{1\}} a_n & = a_n , \\
 \dht^{\{\{1\}, 2\}} a_n & = \dht a_n \cdot a_n , \\
 \dht^{\{\{\{1\}, 2\}, 3\}} a_n & = \dht[\dht a_n \cdot a_n] \cdot a_n , \\
 \dht^{\{\{1\}, 2, \{3\}\}} a_n & = \dht a_n \cdot a_n \cdot \dht a_n = a_n \cdot (\dht a_n)^2 , \\
 \dht^{\{\{1, \{2\}\}, 3, \{4\}\}} a_n & = \dht[a_n \cdot \dht a_n] \cdot a_n \cdot \dht a_n .
}

Our next results provides a skeletal decomposition of $(\kht a_n)^k$.

\begin{proposition}
\label{prop:power}
For every sequence $(a_n)$ in $\ell^p$ and every positive integer $k$ we have
\formula[eq:power]{
 (\kht a_n)^k & = \sum_{S \in \skel_k} \kht[\dht^S a_n] + \sum_{j = 1}^{k - 1} \sum_{S \in \skel_j} \iii[a_n \cdot \dht^S a_n] \cdot (\kht a_n)^{k - j - 1} .
}
\end{proposition}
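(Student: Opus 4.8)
The plan is to prove the identity by induction on $k$, with the product rule of Lemma~\ref{lem:square} providing the single algebraic step and the recursive definitions of skeletons (Definition~\ref{def:skeleton}) and buildings (Definition~\ref{def:building}) controlling the bookkeeping. For the base case $k = 1$ the second sum in~\eqref{eq:power} is empty, while $\skel_1 = \{\{1\}\}$ and $\dht^{\{1\}} a_n = a_n$, so the right-hand side collapses to $\kht[a_n] = \kht a_n = (\kht a_n)^1$.

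For the inductive step, assume~\eqref{eq:power} for a given $k \ge 1$. Multiplying through by $\kht a_n$ and distributing yields
\[
 (\kht a_n)^{k+1} = \sum_{S \in \skel_k} \kht a_n \cdot \kht[\dht^S a_n] + \sum_{j=1}^{k-1} \sum_{S \in \skel_j} \iii[a_n \cdot \dht^S a_n] \cdot (\kht a_n)^{k-j}.
\]
To the first sum I would apply Lemma~\ref{lem:square} with $b_n = \dht^S a_n$, which gives, for each $S \in \skel_k$,
\[
 \kht a_n \cdot \kht[\dht^S a_n] = \kht[a_n \cdot \dht[\dht^S a_n]] + \kht[\dht a_n \cdot \dht^S a_n] + \iii[a_n \cdot \dht^S a_n].
\]
The third term, summed over $S \in \skel_k$, is exactly the $j = k$ slice of the second sum in~\eqref{eq:power} for exponent $k+1$, since $(\kht a_n)^{(k+1)-k-1} = 1$; and the double sum inherited from the induction hypothesis supplies the slices $j = 1, \dots, k-1$, because $(\kht a_n)^{k-j} = (\kht a_n)^{(k+1)-j-1}$.

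It remains to recognise the two $\kht[\dots]$ terms as buildings indexed by $\skel_{k+1}$. Reading off Definition~\ref{def:building}, and writing $\dht^S a_n = \prod_{f \in S} \dht^{\{f\}} a_n$ for any skeleton $S$ (a single factor when $S = \{1\}$), I would verify the three identities $\dht[\dht^S a_n] = \dht^{\{S\}} a_n$, then $\dht^{\{S, k+1\}} a_n = a_n \cdot \dht^{\{S\}} a_n$, and $\dht^{S \cup \{\{k+1\}\}} a_n = \dht^S a_n \cdot \dht a_n$, the last using $\dht^{\{\{k+1\}\}} a_n = \dht a_n$. Together these give $\kht[a_n \cdot \dht[\dht^S a_n]] = \kht[\dht^{\{S,k+1\}} a_n]$ and $\kht[\dht a_n \cdot \dht^S a_n] = \kht[\dht^{S \cup \{\{k+1\}\}} a_n]$. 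By~\eqref{eq:skeleton}, as $S$ ranges over $\skel_k$ the frames $\{S, k+1\}$ and $S \cup \{\{k+1\}\}$ range bijectively over $\skel_{k+1}$ (the two families are disjoint, since only the former contains the bare integer $k+1$, and each map is injective), so summing the two terms over $S \in \skel_k$ reproduces $\sum_{S' \in \skel_{k+1}} \kht[\dht^{S'} a_n]$. Assembling the three contributions completes the inductive step.

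I expect the main obstacle to be precisely this matching of the skeleton recursion with the output of the product rule: one must confirm the building identities in every case, including the degenerate singleton $S = \{1\}$ where $\dht^S a_n = a_n$, and check that the correspondence $S \mapsto (\{S, k+1\},\, S \cup \{\{k+1\}\})$ is a bijection onto $\skel_{k+1}$ with no double counting, so that the sum over $\skel_{k+1}$ is reproduced exactly once. A minor technical point is the well-definedness of the various products and convolutions: it suffices to establish~\eqref{eq:power} first for finitely supported $(a_n)$, where every series is finite and Lemma~\ref{lem:square} applies directly, and then extend to all $(a_n) \in \ell^p$ by continuity; alternatively, the identity may be read off entrywise, each entry being an absolutely convergent series.
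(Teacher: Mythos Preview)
Your proof is correct and follows essentially the same route as the paper: induction on $k$, with the product rule of Lemma~\ref{lem:square} applied to $\kht a_n \cdot \kht[\dht^S a_n]$ in the inductive step, the resulting three terms being matched to $\dht^{\{S,k+1\}}$, $\dht^{S\cup\{\{k+1\}\}}$, and the $j=k$ slice via the recursive definitions of skeletons and buildings. Your additional remarks on the bijectivity of $S\mapsto(\{S,k+1\},\,S\cup\{\{k+1\}\})$ and on well-definedness via finitely supported sequences are more explicit than the paper's argument but add nothing essentially new.
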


\begin{proof}
We proceed by induction with respect to $k$. For $k = 1$ formula~\eqref{eq:power} takes form
\formula{
 \kht a_n & = \sum_{S \in \skel_1} \kht[\dht^S a_n] ,
}
which is obviously true, because $\skel_1$ has only one skeleton $S = \{1\}$ and $\kht[\dht^{\{1\}} a_n] = \kht a_n$. Suppose that~\eqref{eq:power} holds for some positive integer $k$. Then
\formula*[eq:power:aux]{
 (\kht a_n)^{k + 1} & = (\kht a_n)^k \cdot \kht a_n \\
 & = \sum_{S \in \skel_k} \kht[\dht^S a_n] \cdot \kht a_n + \sum_{j = 1}^{k - 1} \sum_{S \in \skel_j} \iii[a_n \cdot \dht^S a_n] \cdot (\kht a_n)^{k - j} .
}
By Lemma~\ref{lem:square} (the product rule), for every $S \in \skel_k$ we have
\formula{
 \kht[\dht^S a_n] \cdot \kht a_n & = \kht[\dht^S a_n \cdot \dht a_n] + \kht[\dht[\dht^S a_n] \cdot a_n] + \iii[\dht^S a_n \cdot a_n] \\
 & = \kht[\dht^{S \cup \{\{k + 1\}\}} a_n] + \kht[\dht^{\{S, k + 1\}} a_n] + \iii[a_n \cdot \dht^S a_n] .
}
The term $\iii[a_n \cdot \dht^S a_n]$, summed over all $S \in \skel_k$, corresponds to the term $j = k$ in the latter sum in~\eqref{eq:power:aux}. On the other hand, by Definition~\ref{def:skeleton}, the sum of the remaining terms $\kht[\dht^{S \cup \{\{k + 1\}\}} a_n]$ and $\kht[\dht^{\{S, k + 1\}} a_n]$ over $S \in \skel_k$ is equal to the sum of $\kht[\dht^S a_n]$ over $S \in \skel_{k + 1}$. Therefore,
\formula{
 (\kht a_n)^{k + 1} & = \sum_{S \in \skel_{k + 1}} \kht[\dht^S a_n] + \sum_{j = 1}^k \sum_{S \in \skel_j} \iii[a_n \cdot \dht^S a_n] \cdot (\kht a_n)^{k - j} ,
}
as desired. This completes the proof by induction.
\end{proof}

%
%

\section{Norms and constants}
\label{sec:norms}

By Lemma~\ref{lem:norms}, the operator norm of $\iii$ on $\ell^p$ is equal to $1$. By the same lemma and the result of~\cite{BanKwa} (given in Theorem~\ref{thm:bk}), the operator norm of $\dht$ on $\ell^p$ is equal to
\formula{
 C_p & = \cot \tfrac{\pi}{2p^*} =
 \begin{cases}
  \tan \tfrac\pi{2p} & \text{when $1 < p \le 2$,} \\
  \cot \tfrac\pi{2p} & \text{when $2 \le p < \infty$.}
 \end{cases}
}
The following definition will allow us to bound the $\ell^{p/\lvert F \rvert}$ norm of the building $(\dht^F a_n)$ with frame $F$.

\begin{definition}
\label{def:buildingnorm}
For a frame $F$, we define the corresponding \emph{building norm} $C_p^{\{F\}}$ inductively by
\formula{
 C_p^{\{F\}} & =
 \begin{cases}
  1 & \text{if $F$ is an integer,} \\
  \displaystyle C_{p / \lvert F \rvert} \prod_{f \in F} C_p^{\{f\}} & \text{if $F$ is a set.}
 \end{cases}
}
If the frame $F$ is a set containing more than one element, we extend the above definition, so that
\formula{
 C_p^F a_n & = \prod_{f \in F} C_p^{\{f\}} .
}
\end{definition}

For example, we have
\formula{
 C_p^{\{1\}} & = 1 , \\
 C_p^{\{\{1\}, 2\}} & = C_p, \\
 C_p^{\{\{\{1\}, 2\}, 3\}} & = C_{p/2} \cdot C_p , \\
 C_p^{\{\{1\}, 2, \{3\}\}} & = C_p \cdot C_p , \\
 C_p^{\{\{\{\{1\}, 2\}, 3\}, 4\}} & = C_{p/3} \cdot C_{p/2} \cdot C_p , \\
 C_p^{\{\{1, \{2\}\}, 3, \{4\}\}} & = C_{p/2} \cdot C_p \cdot C_p .
}

The relation between the norm of the building $\dht^F a_n$ and the corresponding building norm is provided by the following result.

\begin{lemma}
\label{lem:buildingnorm}
For every sequence $(a_n)$ in $\ell^p$ and every frame $F$, we have
\formula{
 \lVert \dht^{\{F\}} a_n \rVert_{p / \lvert F \rvert} & \le C_p^{\{F\}} \lVert a_n \rVert_p^{\lvert F \rvert} .
}
If the frame $F$ is a set, then, more generally,
\formula{
 \lVert \dht^F a_n \rVert_{p / \lvert F \rvert} & \le C_p^F \lVert a_n \rVert_p^{\lvert F \rvert} .
}
\end{lemma}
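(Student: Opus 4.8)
The plan is to prove both inequalities simultaneously by induction on the size $\lvert F \rvert$, exploiting the fact that Definitions~\ref{def:building} and~\ref{def:buildingnorm} are built by the very same recursion, so that the building $\dht^{\{F\}} a_n$ and the constant $C_p^{\{F\}}$ are assembled in lockstep. The only two inputs are the sharp operator norm $\lVert \dht \rVert_q = C_q$ for $q \in (1, \infty)$, furnished by Theorem~\ref{thm:bk} together with Lemma~\ref{lem:norms}, and the generalized Hölder inequality $\lVert \prod_i g_i \rVert_r \le \prod_i \lVert g_i \rVert_{r_i}$ valid whenever $\tfrac1r = \sum_i \tfrac1{r_i}$.

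The base case is $F$ an integer: then $\lvert F \rvert = 1$, $\dht^{\{F\}} a_n = a_n$, and $C_p^{\{F\}} = 1$, so the claimed bound reads $\lVert a_n \rVert_p \le \lVert a_n \rVert_p$, with equality. For the inductive step I would first treat the \emph{product} bound for a set $F = \{f_1, \dots, f_r\}$, where $\dht^F a_n = \prod_i \dht^{\{f_i\}} a_n$. The key arithmetic observation is that the exponents match exactly: since $\lvert F \rvert = \sum_i \lvert f_i \rvert$, we have $\sum_i \tfrac{1}{p/\lvert f_i \rvert} = \tfrac{\lvert F \rvert}{p} = \tfrac{1}{p/\lvert F \rvert}$. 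Hence generalized Hölder gives $\lVert \dht^F a_n \rVert_{p/\lvert F \rvert} \le \prod_i \lVert \dht^{\{f_i\}} a_n \rVert_{p/\lvert f_i \rvert}$, and applying the inductive hypothesis (the single-building bound) to each factor $f_i$, whose size is strictly smaller than $\lvert F \rvert$, turns the right-hand side into $\prod_i C_p^{\{f_i\}} \lVert a_n \rVert_p^{\lvert f_i \rvert} = C_p^F \lVert a_n \rVert_p^{\lvert F \rvert}$.

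Having established the product bound for $F$, I would deduce the single-building bound for $\{F\}$ when $F$ is a set. Unwinding Definition~\ref{def:building} gives $\dht^{\{F\}} a_n = \dht\bigl[\prod_{f \in F} \dht^{\{f\}} a_n\bigr]$, so applying the operator norm of $\dht$ on $\ell^{p/\lvert F \rvert}$ and then the product estimate just proved yields $\lVert \dht^{\{F\}} a_n \rVert_{p/\lvert F \rvert} \le C_{p/\lvert F \rvert} \lVert \dht^F a_n \rVert_{p/\lvert F \rvert} \le C_{p/\lvert F \rvert} C_p^F \lVert a_n \rVert_p^{\lvert F \rvert}$, which equals $C_p^{\{F\}} \lVert a_n \rVert_p^{\lvert F \rvert}$ by the recursion in Definition~\ref{def:buildingnorm}. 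This closes the induction.

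The step requiring care, rather than being a genuine obstacle, is the ordering of the two claims: the single-building bound for $\{F\}$ relies on the product bound for the \emph{same} $F$, which in turn relies only on single-building bounds for the strictly smaller sub-frames $f_i$. Thus the induction must be organized to prove the product statement before the single-building statement at each fixed size. One must also record that the invocation of $\lVert \dht \rVert_{p/\lvert F \rvert} = C_{p/\lvert F \rvert}$ is legitimate only when $p/\lvert F \rvert > 1$, i.e.\ $\lvert F \rvert < p$; this is precisely the regime in which buildings arise in Proposition~\ref{prop:power}, since there every building sits inside an outer $\kht$ or $\iii$ and the relevant sub-frames have size strictly below $p$.
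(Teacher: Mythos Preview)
Your argument is essentially the paper's: both apply the generalized H\"older inequality to the product $\prod_{f\in F}\dht^{\{f\}}a_n$ and then the sharp bound $\lVert\dht\rVert_{p/\lvert F\rvert}=C_{p/\lvert F\rvert}$ for the outer $\dht$, with the recursion for $C_p^{\{F\}}$ matching that for $\dht^{\{F\}}$ by design. The one slip is the induction parameter. You induct on the \emph{size} $\lvert F\rvert$ and assert that each $f_i\in F$ has size strictly smaller than $\lvert F\rvert$; this fails whenever $F$ is a singleton set. For example $1$, $\{1\}$, $\{\{1\}\}$, $\{\{\{1\}\}\}$, \dots\ are all frames of size~$1$, so an induction on size alone never reaches $F=\{\{1\}\}$ (whose single element $\{1\}$ has the same size). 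The paper avoids this by inducting on the \emph{depth} of $F$: every element of a set frame has strictly smaller depth, so the recursion terminates for arbitrary frames. Your proof becomes correct verbatim once ``size'' is replaced by ``depth'' (or once you induct lexicographically on the pair (size, depth)); note also that for the sub-frames that actually occur inside skeletons the only singleton sets are $\{j\}$ with $j$ an integer, so the base case applies directly and the issue is invisible in the application. Your closing remark about needing $p/\lvert F\rvert>1$ is a valid caveat that the paper leaves implicit.
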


\begin{proof}
The proof of the first part of the lemma proceeds by induction with respect to the depth of $F$. If $F$ is an integer (a frame of depth $0$), then $\dht^{\{F\}} a_n = a_n$ has norm equal to $\lVert a_n \rVert_p = C_p^{\{F\}} \lVert a_n \rVert_p$. Suppose now that
\formula{
 \lVert \dht^{\{F\}} a_n \rVert_{p / \lvert F \rvert} & \le C_p^{\{F\}} \lVert a_n \rVert_p^{\lvert F \rvert}
}
for every frame $F$ of depth less than $k$ for some positive integer $k$. Let $F$ be a frame of depth $k$. Then
\formula{
 \lVert \dht^{\{F\}} a_n \rVert_{p / \lvert F \rvert} & = \biggl\lVert \dht \biggl[ \prod_{f \in F} \dht^{\{f\}} a_n \biggr] \biggr\rVert_{p / \lvert F \rvert} \le \lVert \dht \rVert_{p / \lvert F \rvert} \, \biggl\lVert \prod_{f \in F} \dht^{\{f\}} a_n \biggr\rVert_{p / \lvert F \rvert} .
}
By definition, the size $\lvert F \rvert$ of frame $F$ is the sum of sizes $\lvert f \rvert$ of all frames $f \in F$. Thus, Hölder's inequality implies that
\formula{
 \lVert \dht^{\{F\}} a_n \rVert_{p / \lvert F \rvert} & \le \lVert \dht \rVert_{p / \lvert F \rvert} \prod_{f \in F} \lVert \dht^{\{f\}} a_n \rVert_{p / \lvert f \rvert} .
}
Using the equality $\lVert \dht \rVert_{p / \lvert F \rvert} = C_{p / \lvert F \rvert}$ and the induction hypothesis, we find that
\formula{
 \lVert \dht^{\{F\}} a_n \rVert_{p / \lvert F \rvert} & \le C_{p / \lvert F \rvert} \prod_{f \in F} C_p^{\{f\}} \lVert a_n \rVert_p^{\lvert f \rvert} = C_p^{\{F\}} \lVert a_n \rVert_p^{\lvert F \rvert} ,
}
as desired. This completes the proof by induction.

The other part of the lemma follows now by the same argument as above, but without the final application of $\dht$:
\formula{
 \lVert \dht^F a_n \rVert_{p / \lvert F \rvert} & = \biggl\lVert \prod_{f \in F} \dht^{\{f\}} a_n \biggr\rVert_{p / \lvert F \rvert} \\
 & \le \prod_{f \in F} \lVert \dht^{\{f\}} a_n \rVert_{p / \lvert f \rvert} = \prod_{f \in F} C_p^{\{f\}} \lVert a_n \rVert_p^{\lvert f \rvert} = C_p^F \lVert a_n \rVert_p^{\lvert F \rvert} ,
}
and the proof is complete.
\end{proof}

The skeletal decomposition allows us to find an inequality that links the operator norms of $\kht$ on $\ell^p$ and $\ell^{p/k}$.

\begin{lemma}
\label{lem:rec}
Let $k$ be a positive integer and $p \ge k$. The operator norms of $\kht$ on $\ell^p$ and $\ell^{p/k}$ satisfy the inequality
\formula{
 \lVert \kht \rVert_p^k & \le \biggl( \sum_{S \in \skel_k} C_p^S \biggr) \lVert \kht \rVert_{p/k} + \sum_{j = 1}^{k - 1} \biggl( \sum_{S \in \skel_j} C_p^S \biggr) \lVert \kht \rVert_p^{k - j - 1} .
}
\end{lemma}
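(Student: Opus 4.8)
The plan is to apply the skeletal decomposition of Proposition~\ref{prop:power}, take the $\ell^{p/k}$ norm of both sides, and estimate each summand using Lemma~\ref{lem:buildingnorm} together with the known norms $\lVert \dht \rVert_q = C_q$ and $\lVert \iii \rVert_q = 1$. First I would normalise: it suffices to bound $\lVert (\kht a_n)^k \rVert_{p/k}$ by the right-hand side whenever $\lVert a_n \rVert_p \le 1$, since $\lVert (\kht a_n)^k \rVert_{p/k} = \lVert \kht a_n \rVert_p^k$ and taking the supremum over such $(a_n)$ recovers $\lVert \kht \rVert_p^k$ on the left. The hypothesis $p \ge k$ guarantees that every exponent $p/\lvert F \rvert$ appearing below satisfies $p/\lvert F \rvert \ge 1$, so that all the intermediate norms and Hölder applications are legitimate.

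Applying Proposition~\ref{prop:power} and the triangle inequality in $\ell^{p/k}$, I would split the estimate into two groups. For the first group, each term $\kht[\dht^S a_n]$ with $S \in \skel_k$ satisfies $\lVert \kht[\dht^S a_n] \rVert_{p/k} \le \lVert \kht \rVert_{p/k} \, \lVert \dht^S a_n \rVert_{p/k}$ by submultiplicativity of the operator norm; here $\dht^S a_n$ is the building with set-frame $S$ in the sense of~\eqref{eq:building:i}, of size $\lvert S \rvert = k$, so the second part of Lemma~\ref{lem:buildingnorm} gives $\lVert \dht^S a_n \rVert_{p/k} \le C_p^S \lVert a_n \rVert_p^k \le C_p^S$. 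Summing over $S \in \skel_k$ produces exactly the first term on the right-hand side of~\eqref{eq:rec}.

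For the second group, consider a term $\iii[a_n \cdot \dht^S a_n] \cdot (\kht a_n)^{k-j-1}$ with $1 \le j \le k-1$ and $S \in \skel_j$. The factor $a_n \cdot \dht^S a_n$ has size $j+1$, so by Hölder's inequality and Lemma~\ref{lem:buildingnorm} its $\ell^{p/(j+1)}$ norm is at most $\lVert a_n \rVert_p \cdot \lVert \dht^S a_n \rVert_{p/j} \le C_p^S \lVert a_n \rVert_p^{j+1}$; since $\lVert \iii \rVert_{p/(j+1)} = 1$, the same bound holds for $\iii[a_n \cdot \dht^S a_n]$. I would then apply Hölder's inequality once more to the whole product, splitting off this single factor (in $\ell^{p/(j+1)}$) from the $k-j-1$ copies of $\kht a_n$ (each in $\ell^p$); the exponents balance because $\tfrac{j+1}{p} + \tfrac{k-j-1}{p} = \tfrac{k}{p}$. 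Using $\lVert \kht a_n \rVert_p \le \lVert \kht \rVert_p$ for each of the remaining factors, this bounds the term by $C_p^S \lVert \kht \rVert_p^{k-j-1}$. Summing over $S \in \skel_j$ and then over $j$ yields the second term of~\eqref{eq:rec}, completing the estimate.

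The routine part is the chain of Hölder estimates; the only point requiring care — and the main obstacle — is the correct bookkeeping of the conjugate exponents in the second group, ensuring that separating the $\iii$-factor (of size $j+1$) from the $k-j-1$ trailing copies of $\kht a_n$ produces exponents summing to $k/p$ and that none of the intermediate spaces $\ell^{p/\lvert F \rvert}$ or $\ell^{p/(j+1)}$ drop below $\ell^1$, which is precisely what the assumption $p \ge k$ secures.
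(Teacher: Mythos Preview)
Your proposal is correct and follows essentially the same route as the paper's own proof: apply Proposition~\ref{prop:power}, take the $\ell^{p/k}$ norm, estimate the $\kht[\dht^S a_n]$ terms via $\lVert\kht\rVert_{p/k}$ and Lemma~\ref{lem:buildingnorm}, and handle the $\iii$-terms by H\"older (splitting off the factor in $\ell^{p/(j+1)}$ from the $k-j-1$ copies of $\kht a_n$ in $\ell^p$) together with $\lVert\iii\rVert_{p/(j+1)}=1$. The only cosmetic difference is that you normalise $\lVert a_n\rVert_p\le 1$ at the outset, whereas the paper carries the factors $\lVert a_n\rVert_p$ explicitly and takes the supremum at the end.
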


\begin{proof}
Suppose that $(a_n)$ is a sequence in $\ell^p$, $k$ is a positive integer and $p \ge k$. By Proposition~\ref{prop:power} and triangle inequality, we have
\formula{
 \lVert (\kht a_n)^k \rVert_{p/k} & \le \sum_{S \in \skel_k} \lVert \kht[\dht^S a_n] \rVert_{p/k} + \sum_{j = 1}^{k - 1} \sum_{S \in \skel_j} \lVert \iii[a_n \cdot \dht^S a_n] \cdot (\kht a_n)^{k - j - 1} \rVert_{p/k} .
}
The left-hand side is equal to $\lVert \kht a_n \rVert_p^k$. Applying Hölder's inequality to the expressions on the right-hand side, we find that
\formula{
 \lVert \kht a_n \rVert_p^k & \le \sum_{S \in \skel_k} \lVert \kht[\dht^S a_n] \rVert_{p/k} \\
 & \qquad + \sum_{j = 1}^{k - 1} \sum_{S \in \skel_j} \lVert \iii[a_n \cdot \dht^S a_n] \rVert_{p/(j+1)} \lVert (\kht a_n)^{k - j - 1} \rVert_{p/(k - j - 1)} \\
 & = \sum_{S \in \skel_k} \lVert \kht[\dht^S a_n] \rVert_{p/k} + \sum_{j = 1}^{k - 1} \sum_{S \in \skel_j} \lVert \iii[a_n \cdot \dht^S a_n] \rVert_{p/(j+1)} \lVert \kht a_n \rVert_p^{k - j - 1} .
}
Since the operator norm of $\iii$ is equal to $1$, we obtain
\formula{
 \lVert \kht a_n \rVert_p^k & \le \sum_{S \in \skel_k} \lVert \kht \rVert_{p/k} \lVert \dht^S a_n \rVert_{p/k} + \sum_{j = 1}^{k - 1} \sum_{S \in \skel_j} \lVert a_n \cdot \dht^S a_n \rVert_{p/(j+1)} \lVert \kht a_n \rVert_p^{k - j - 1} .
}
Another application of Hölder's inequality leads to
\formula{
 \lVert \kht a_n \rVert_p^k & \le \sum_{S \in \skel_k} \lVert \kht \rVert_{p/k} \lVert \dht^S a_n \rVert_{p/k} + \sum_{j = 1}^{k - 1} \sum_{S \in \skel_j} \lVert a_n \rVert_p \lVert \dht^S a_n \rVert_{p/j} \lVert \kht a_n \rVert_p^{k - j - 1} .
}
By Lemma~\ref{lem:buildingnorm}, we have
\formula{
 \lVert \kht a_n \rVert_p^k & \le \sum_{S \in \skel_k} \lVert \kht \rVert_{p/k} \, C_p^S \lVert a_n \rVert_p^k + \sum_{j = 1}^{k - 1} \sum_{S \in \skel_j} \lVert a_n \rVert_p \, C_p^S \lVert a_n \rVert_p^j \lVert \kht a_n \rVert_p^{k - j - 1} .
}
It remains to apply the supremum over all sequences $(a_n)$ in $\ell^p$ with norm less than or equal to $1$ to both sides of the above inequality.
\end{proof}

%
%

\section{Cotangents}
\label{sec:cotangents}

In order to prove Theorem~\ref{thm:main}, we need one more technical ingredient. We remark that throughout this section the constant $C_p$ is used only with $p \ge 2$, so that $C_p = \cot \tfrac\pi{2p}$.

\begin{lemma}
\label{lem:cot}
If $k$ is a positive integer, $p \ge 2 k$ and $x$ is a real number such that
\formula[eq:cot]{
 x^k & \le \biggl( \sum_{S \in \skel_k} C_p^S \biggr) C_{p/k} + \sum_{j = 1}^{k - 1} \biggl( \sum_{S \in \skel_j} C_p^S \biggr) x^{k - j - 1} ,
}
then $x \le C_p$.
\end{lemma}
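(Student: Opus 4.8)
The plan is to prove Lemma~\ref{lem:cot} in two moves: first show that $x = C_p$ attains equality in~\eqref{eq:cot}, and then show that the solution set of~\eqref{eq:cot} is exactly $(-\infty, C_p]$. Throughout I abbreviate $A_j = \sum_{S \in \skel_j} C_p^S$, so that~\eqref{eq:cot} reads $x^k \le A_k C_{p/k} + \sum_{j=1}^{k-1} A_j x^{k-j-1}$. The two ingredients are the algebraic identity
\[
 C_p^k = A_k C_{p/k} + \sum_{j=1}^{k-1} A_j C_p^{k-j-1} ,
\]
which says that $x = C_p$ gives equality, and an elementary monotonicity observation that promotes this single equality to the full implication.

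First I would record a recursion for the numbers $A_j$. Fix $S \in \skel_j$. By Definition~\ref{def:skeleton}, $S$ contributes exactly two skeletons to $\skel_{j+1}$, namely $\{S, j+1\}$ and $S \cup \{\{j+1\}\}$. Using Definition~\ref{def:buildingnorm} together with $\lvert S \rvert = j$ and $C_p^{\{j+1\}} = 1$, a direct computation gives $C_p^{\{S, j+1\}} = C_{p/j}\, C_p^S$ (the outer bracket around $S$ contributes the factor $C_{p/\lvert S\rvert}=C_{p/j}$) and $C_p^{S \cup \{\{j+1\}\}} = C_p\, C_p^S$ (the singleton $\{j+1\}$ contributes $C_{p/1}=C_p$). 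Summing over $S \in \skel_j$ yields
\[
 A_{j+1} = (C_{p/j} + C_p)\, A_j , \qquad A_1 = 1 .
\]

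The heart of the matter is the identity above, which I would prove by induction on $k$. Writing $\Phi_m = A_m C_{p/m} + \sum_{j=1}^{m-1} A_j C_p^{m-j-1}$ for its right-hand side, the base case $\Phi_1 = A_1 C_p = C_p$ is immediate. Assuming $\Phi_k = C_p^k$, I peel the $j = k$ term off $\Phi_{k+1}$ and substitute $\sum_{j=1}^{k-1} A_j C_p^{k-j-1} = C_p^k - A_k C_{p/k}$ (the level-$k$ identity); after cancelling $C_p^{k+1}$ and using $A_{k+1} = (C_{p/k}+C_p)A_k$, the desired equality $\Phi_{k+1} = C_p^{k+1}$ reduces to the single relation
\[
 (C_{p/k} + C_p)\, C_{p/(k+1)} = C_p\, C_{p/k} - 1 .
\]
This is precisely the cotangent addition formula: with $\beta = \tfrac{\pi}{2p}$ and $C_{p/i} = \cot(i\beta)$ it is the rearrangement of $\cot((k+1)\beta) = \frac{\cot(k\beta)\cot\beta - 1}{\cot(k\beta) + \cot\beta}$. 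The formula requires the angles $i\beta = \tfrac{i\pi}{2p}$ to lie in $(0, \tfrac\pi2)$, i.e.\ $C_{p/i} = \cot\tfrac{i\pi}{2p}$ for each $i \le k$; this holds when $p \ge 2k$, which is the situation in the application (indeed there $p/k = 2$, so $C_{p/k} = C_2 = 1$). I expect recognizing this telescoping cotangent structure hidden in the combinatorics of skeletons to be the main obstacle; the rest is bookkeeping.

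Finally, to pass from the equality to the inequality, set $Q(x) = A_k C_{p/k} + \sum_{j=1}^{k-1} A_j x^{k-j-1}$, the right-hand side of~\eqref{eq:cot}, and note that every coefficient $A_j$ and $C_{p/k}$ is strictly positive while every exponent occurring in $Q$ is at most $k-2$. Hence for $x > 0$ the function
\[
 g(x) = \frac{Q(x)}{x^k} = A_k C_{p/k}\, x^{-k} + \sum_{j=1}^{k-1} A_j\, x^{-(j+1)}
\]
is a positive combination of strictly decreasing powers, so $g$ is strictly decreasing on $(0,\infty)$, and the identity gives $g(C_p) = 1$. For $x > 0$, inequality~\eqref{eq:cot} is equivalent to $g(x) \ge 1 = g(C_p)$, which by strict monotonicity forces $x \le C_p$; and for $x \le 0$ the conclusion is trivial since $C_p \ge 1 > 0$. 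This yields $x \le C_p$ in all cases, completing the proof.
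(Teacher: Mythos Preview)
Your proof follows essentially the same route as the paper's: both reduce to showing that $x=C_p$ gives equality in~\eqref{eq:cot} by induction on $k$ via the cotangent addition formula, and then conclude by the strict monotonicity of the right-hand side divided by $x^k$. Your packaging differs only cosmetically: you first isolate the recursion $A_{j+1}=(C_{p/j}+C_p)A_j$ and then run the induction, whereas the paper performs this substitution inside the induction step; the resulting identity $(C_{p/k}+C_p)\,C_{p/(k+1)}=C_p\,C_{p/k}-1$ is exactly the paper's~\eqref{eq:cot:cp} with $i=1$, $j=k$.

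Your remark that the identification $C_{p/i}=\cot\tfrac{i\pi}{2p}$ requires $p/i\ge 2$, and hence that the induction as written needs $p\ge 2k$, is well taken. The paper asserts~\eqref{eq:cot:cp} for $1\le i,j\le p$ without comment, but in fact when $p/i<2$ one has $C_{p/i}=\tan\tfrac{i\pi}{2p}\ne\cot\tfrac{i\pi}{2p}$, and the equality $f_k(C_p)=C_p^k$ can fail in the range $k<p<2k$ (for instance $p=3$, $k=2$ gives $f_2(C_3)=2C_3C_{3/2}+1=7\neq 3=C_3^2$). Your restriction to $p\ge 2k$ is therefore the honest hypothesis; since the only application (Corollary~\ref{cor:main}) uses $k=p/2$, nothing is lost.
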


\begin{proof}
Let $f_k(x)$ denote the right-hand side of~\eqref{eq:cot} (for a fixed $p \ge 2 k$). Thus, we need to prove that if $x^k \le f_k(x)$, then $x \le C_p$; or, equivalently: if $x > C_p$, then $f_k(x) / x^k < 1$.

The function $f_k(x)$ is a non-zero polynomial of $x$ of degree less than $k$ with non-negative coefficients, and hence $f_k(x) / x^k$ is a decreasing function of $x > 0$. It follows that
\formula{
 \frac{f_k(x)}{x^k} & < \frac{f_k(C_p)}{C_p^k}
}
when $x > C_p$. Thus, it is sufficient to prove that $f_k(C_p) = C_p^k$, that is, for $x = C_p$ we have equality in~\eqref{eq:cot}.

The proof is based on the formula for the cotangent of sum, which we re-write in a form that resembles the product rule~\eqref{eq:square}:
\formula{
 \cot \alpha \cdot \cot \beta & = \cot(\alpha + \beta) \cot \beta + \cot(\alpha + \beta) \cot \alpha + 1
}
whenever $\alpha, \beta > 0$ and $\alpha + \beta < \pi$. Setting $\alpha = \tfrac{i\pi}{2p}$ and $\beta = \tfrac{j\pi}{2p}$ with positive integers $i, j$ such that $p \ge 2 (i + j)$, we find that
\formula[eq:cot:cp]{
 C_{p/i} C_{p/j} & = C_{p/(i+j)} C_{p/j} + C_{p/(i+j)} C_{p/i} + 1 ,
}
which is exactly what is needed to show that $f_k(C_p) = C_p^k$. We proceed by induction with respect to $k$.

For $k = 1$ and $p \ge 2$, we have
\formula{
 f_1(x) & = \biggl( \sum_{S \in \skel_1} C_p^S \biggr) C_p = C_p^{\{1\}} C_p = C_p ,
}
and hence indeed $f_1(C_p) = C_p$. Suppose now that for some positive integer $k$ we have $p \ge 2 k + 2$ and $f_k(C_p) = C_p^k$. Observe that
\formula{
 C_p^{k + 1} = C_p f_k(C_p) & = \biggl( \sum_{S \in \skel_k} C_p^S \biggr) C_{p/k} C_p + \sum_{j = 1}^{k - 1} \biggl( \sum_{S \in \skel_j} C_p^S \biggr) C_p^{k - j} .
}
Recall that every skeleton of size $k + 1$ is equal to either $\{S, k + 1\}$ or $S \cup \{\{k + 1\}\}$ for some skeleton $S$ of size $k$. This property and the definition of the constant $C_p^S$ imply that
\formula{
 f_{k+1}(x) & = \biggl( \sum_{S \in \skel_{k+1}} C_p^S \biggr) C_{p/(k+1)} + \sum_{j = 1}^k \biggl( \sum_{S \in \skel_j} C_p^S \biggr) x^{k - j} \\
 & = \biggl( \sum_{S \in \skel_k} (C_{p/k} C_p^S + C_p^S C_p) \biggr) C_{p/(k+1)} + \sum_{j = 1}^k \biggl( \sum_{S \in \skel_j} C_p^S \biggr) x^{k - j} \\
 & = \biggl( \sum_{S \in \skel_k} C_p^S \biggr) (C_{p/(k+1)} C_{p/k} + C_{p/(k+1)} C_p + 1) + \sum_{j = 1}^{k - 1} \biggl( \sum_{S \in \skel_j} C_p^S \biggr) x^{k - j} .
}
Applying the cotangent of sum identity~\eqref{eq:cot:cp} with $i = 1$ and $j = k$, we obtain
\formula{
 f_{k+1}(x) & = \biggl( \sum_{S \in \skel_k} C_p^S \biggr) C_p C_{p/k} + \sum_{j = 1}^{k - 1} \biggl( \sum_{S \in \skel_j} C_p^S \biggr) x^{k - j} .
}
We conclude that
\formula{
 f_{k+1}(C_p) & = \biggl( \sum_{S \in \skel_k} C_p^S \biggr) C_p C_{p/k} + \sum_{j = 1}^{k - 1} \biggl( \sum_{S \in \skel_j} C_p^S \biggr) C_p^{k - j} = C_p f_k(C_p) = C_p^{k + 1} ,
}
as desired. This completes the proof by induction.
\end{proof}

\begin{corollary}
\label{cor:recursion}
If $k$ is a positive integer, $p \ge 2 k$ and $\lVert \kht \rVert_{p/k} \le C_{p/k}$, then $\lVert \kht \rVert_p \le C_p$.
\end{corollary}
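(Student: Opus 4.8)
The plan is to read off the result as an immediate combination of the two preceding lemmas, with the norm $\lVert \kht \rVert_p$ playing the role of the free variable $x$ in Lemma~\ref{lem:cot}. First I would set $x = \lVert \kht \rVert_p$ and invoke Lemma~\ref{lem:rec}, which (since $p \ge k$) gives the recursive inequality
\formula{
 x^k & \le \biggl( \sum_{S \in \skel_k} C_p^S \biggr) \lVert \kht \rVert_{p/k} + \sum_{j = 1}^{k - 1} \biggl( \sum_{S \in \skel_j} C_p^S \biggr) x^{k - j - 1} .
}
The entire content of the corollary is then to convert this into the hypothesis~\eqref{eq:cot} of Lemma~\ref{lem:cot}, and the single substantive point is the direction of monotonicity.

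The key step is to replace $\lVert \kht \rVert_{p/k}$ by its assumed upper bound $C_{p/k}$. This is legitimate precisely because the coefficient $\sum_{S \in \skel_k} C_p^S$ multiplying it is non-negative: each building norm $C_p^S$ is, by Definition~\ref{def:buildingnorm}, a finite product of factors of the form $C_{p/\lvert f \rvert} = \cot \tfrac{\pi}{2(p/\lvert f\rvert)^*}$, and each such cotangent is strictly positive for the relevant arguments, so $C_p^S > 0$ and the coefficient is a sum of positive terms. Using $\lVert \kht \rVert_{p/k} \le C_{p/k}$ together with this positivity, I obtain
\formula{
 x^k & \le \biggl( \sum_{S \in \skel_k} C_p^S \biggr) C_{p/k} + \sum_{j = 1}^{k - 1} \biggl( \sum_{S \in \skel_j} C_p^S \biggr) x^{k - j - 1} ,
}
which is exactly inequality~\eqref{eq:cot}.

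Finally I would apply Lemma~\ref{lem:cot} (whose hypotheses $k \in \N$ and $p \ge k$ are already in force) to this inequality, concluding that $x = \lVert \kht \rVert_p \le C_p$, as claimed. There is essentially no obstacle here: all the analytic and combinatorial work has been front-loaded into Lemma~\ref{lem:rec} (the skeletal decomposition plus Hölder's inequality) and Lemma~\ref{lem:cot} (the cotangent addition identity driving the induction). The only thing one must be careful about is not to spoil the inequality when substituting the bound for $\lVert \kht \rVert_{p/k}$, which is why the positivity of the coefficients is worth recording explicitly.
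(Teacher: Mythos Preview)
Your proof is correct and follows exactly the same route as the paper: set $x = \lVert \kht \rVert_p$, feed Lemma~\ref{lem:rec} into Lemma~\ref{lem:cot} after replacing $\lVert \kht \rVert_{p/k}$ by $C_{p/k}$. The paper compresses this into a single sentence and leaves the positivity of the coefficient $\sum_{S \in \skel_k} C_p^S$ implicit, whereas you spell it out; that extra care is harmless and arguably an improvement.
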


\begin{proof}
By Lemma~\ref{lem:rec}, the number $x = \lVert \kht \rVert_p$ satisfies~\eqref{eq:cot}, and therefore, by Lemma~\ref{lem:cot}, $x \le C_p$.
\end{proof}

\begin{corollary}
\label{cor:main}
If $p$ is a positive even integer, then $\lVert \kht \rVert_p = C_p$.
\end{corollary}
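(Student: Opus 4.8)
The plan is to deduce the corollary from the recursive estimate of Corollary~\ref{cor:recursion} together with the $\ell^2$ base case, after separately recording the matching lower bound. Write $p = 2n$ with $n \in \N$. The lower bound is already in hand: by Lemma~\ref{lem:norms} we have $\lVert \kht \rVert_p = \lVert \rtt \rVert_p$, and by~\eqref{eq:rtt:lower} the latter is at least $\cot \tfrac{\pi}{2p^*} = C_p$. Thus it only remains to establish the upper bound $\lVert \kht \rVert_p \le C_p$.

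For the upper bound I would apply Corollary~\ref{cor:recursion} a single time, with $k = n$. Then $p/k = 2$, the admissibility condition $p \ge k$ reads $2n \ge n$ and holds for every $n \in \N$, and $p/k = 2$ lies in $(1, \infty)$, so all the norms involved are well defined. The hypothesis $\lVert \kht \rVert_{p/k} \le C_{p/k}$ of the corollary becomes $\lVert \kht \rVert_2 \le C_2$, and this is in fact an equality: Lemma~\ref{lem:l2} gives $\lVert \kht \rVert_2 = 1$, while $C_2 = \cot \tfrac{\pi}{4} = 1$. Corollary~\ref{cor:recursion} therefore yields $\lVert \kht \rVert_{2n} \le C_{2n}$, that is, $\lVert \kht \rVert_p \le C_p$. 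Combining the two bounds gives $\lVert \kht \rVert_p = C_p$, as claimed.

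I do not anticipate any genuine obstacle at this stage: the analytic and combinatorial substance has already been spent in Proposition~\ref{prop:power} (the skeletal decomposition), Lemma~\ref{lem:rec} (the resulting norm inequality), and Lemma~\ref{lem:cot} (that this inequality forces $x \le C_p$), all of which are packaged into Corollary~\ref{cor:recursion}. The only points warranting a moment's care are purely bookkeeping: verifying the admissibility conditions $p \ge k$ and $p/k > 1$, and confirming that the base case is exactly the unitarity statement $\lVert \kht \rVert_2 = 1$ of Lemma~\ref{lem:l2}. It is worth remarking that one could instead reach $p = 2n$ by iterating Corollary~\ref{cor:recursion} in smaller steps, but the single step $k = n$ is the most economical route, and it makes transparent that the even-integer case reduces to the $\ell^2$ isometry bound fed through the recursion.
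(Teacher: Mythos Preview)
Your proposal is correct and follows essentially the same route as the paper's proof: both invoke Lemma~\ref{lem:l2} for the base case $\lVert \kht \rVert_2 = 1 = C_2$, apply Corollary~\ref{cor:recursion} once with $k = p/2$ to obtain the upper bound, and combine this with the lower bound from~\eqref{eq:rtt:lower} via Lemma~\ref{lem:norms}. The only difference is that you spell out the admissibility checks more explicitly, which is harmless.
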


\begin{proof}
By Lemma~\ref{lem:l2}, we have $\lVert \kht \rVert_2 = 1 = C_2$. We apply Corollary~\ref{cor:recursion} with $k = \tfrac p2$ to find that $\lVert \kht \rVert_p \le C_p$. Furthermore, $\lVert \kht \rVert_p = \lVert \rtt \rVert_p \ge C_p$ by~\eqref{eq:titchmarsh}, \eqref{eq:pichorides} and Lemma~\ref{lem:norms}, and so equality follows.
\end{proof}

\begin{proof}[Proof of Theorem~\ref{thm:main}]
Since $\lVert \rtt \rVert_p = \lVert \kht \rVert_p$ by Lemma~\ref{lem:norms}, the desired result when $p$ is an even integer is an immediate consequence of Corollary~\ref{cor:main}. Extension to the case when the conjugate exponent of $p$ is an even integer follows by duality.
\end{proof}

\subsection*{Acknowledgement}
We would like to thank the anonymous referees for their valuable comments and suggestions which greatly improved the exposition of this paper, and for informing us about the connection with Matsaev's theorem and the monograph of Gohberg and Krein.

%
%


\providecommand{\bysame}{\leavevmode\hbox to3em{\hrulefill}\thinspace}
\providecommand{\MR}{\relax\ifhmode\unskip\space\fi MR }
\providecommand{\MRhref}[2]{%
  \href{http://www.ams.org/mathscinet-getitem?mr=#1}{#2}
}
\providecommand{\href}[2]{#2}

%
%

\end{document}